

\documentclass[preprint,12pt]{elsarticle}




\usepackage{amssymb}
\usepackage{mathrsfs}
\usepackage{amsthm}
\usepackage{amsmath}
\allowdisplaybreaks[1]
\usepackage{subfigure}
\usepackage{color}
\usepackage{longtable,booktabs}
\usepackage{enumerate}
\usepackage{extarrows}
\usepackage{lineno}



\newtheorem{theorem}{Theorem}[section]
\newtheorem{lemma}{Lemma}[section]
\newtheorem{corollary}{Corollary}[section]
\newtheorem{proposition}{Proposition}[section]
\newtheorem{definition}{Definition}[section]
\newtheorem{remark}{Remark}[section]



\begin{document}
\begin{frontmatter}



\title{Generalized Ornstein-Uhlenbeck process for affine
stochastic functional differential equations and its applications\tnoteref{title1}}
\tnotetext[title1]{This work was partially supported by the National Natural Science Foundation of China (NSFC) under
Grants No.12171321, No.11971316, No.11771295, No.11501369 and No.11371252; the NSF of Shanghai Grants under No.25ZR1401278, No.19ZR1437100 and No.20JC1413800; Chen Guang Project (14CG43) of Shanghai Municipal Education Commission, Shanghai Education Development Foundation; Yangfan Program of Shanghai (14YF1409100) and Shanghai Gaofeng Project for University Academic Program Development.}
\author{Xiang Lv\corref{cor1}}
\ead{lvxiang@shnu.edu.cn}

\address{Department of Mathematics, Shanghai Normal
University, Shanghai 200234, People's Republic of China}
\cortext[cor1]{Corresponding author}
\begin{abstract}
This paper studies the existence and global stability of generalized Ornstein-Uhlenbeck process for affine stochastic functional differential equations.  Various very basic and important properties are established. In the applications, we present a standard and rigorous procedure for guaranteeing the existence and uniqueness of random equilibria for nonlinear stochastic functional differential equations, which attracts all pull-back trajectories in different types of convergence. Some examples are given to illustrate our main results. The results presented in this paper improve and simplify the conclusions of Jiang and Lv [{\it SIAM J. Control Optim.}, 54 (2016), pp. 2383-2402] and [{\it J. Differential Equations}, 367 (2023), pp. 890-921].
\end{abstract}

\begin{keyword}Random dynamical systems\sep Stochastic functional differential equations\sep Stationary stochastic processes


\MSC[2020] 37H05\sep 34K50\sep 60G10
\end{keyword}
\end{frontmatter}


\section{Introduction}
Consider the Ornstein-Uhlenbeck type stochastic differential equation (SDE) in $\mathbb{R}$:
\begin{equation}\label{OU-1}dx(t)=\lambda x(t)dt+\sigma dW(t),
\end{equation}
where $W(t)$ is a two-sided Wiener process in $\mathbb{R}$ and $\lambda$, $\sigma$ are constants. Assume that $\lambda<0$ and $\sigma\neq0$ , it is well known that there exists an exponentially stable $\mathscr{F}_-^W$-measurable stationary solution (random equilibrium)
\begin{equation}\label{OU-2}u(\omega)=\sigma\int_{-\infty}^0e^{-\lambda t}dW(t),\end{equation}
where $\mathscr{F}_-^W=\sigma\{\omega\mapsto W(t,\omega):t\leq0\}$ is the past $\sigma$-algebra, see \cite[Example 2.4.4]{Chu} and \cite[Proposition 3.1]{CS}. It is worth pointing out that the stationary Ornstein-Uhlenbeck process plays a key role on the conjugacy of stochastic and random differential equations, see \cite{DLS,GP,IL,IS}. Based on this observation, we have further investigated the existence and global stability of nontrivial stationary solutions for SDEs with additive or multiplicative white noise, such as \cite{JL1,JL2,Lv}.

In fact, in many real world scenarios, for example, biometrical and ecological systems, finance and economics,
networked systems and wireless communications, time delays are often unavoidable, see Banks \cite{Bank}, Cushing \cite{Cu}, Diekmann et al. \cite{DG}, Franke, H\"{a}rdle and Hafner \cite{FHH}, Hale and Lunel \cite{HL} or Kolmanovskii and Myshkis \cite{KM} for applications. Taking into account the effect of noise, these dynamical systems with memories can be modeled by stochastic functional differential equations, which have been researched extensively and intensively during the past decades, see Caraballo, Garrido-Atienza and Schmalfu{\ss} \cite{CGS}, Caraballo, Real and Taniguchi \cite{CRT},  Liu \cite{L}, Mao \cite{M}, Mohammed \cite{Mo}, Mohammed and Scheutzow \cite{MS1,MS2,MS3} and references therein.

Motivated by the Ornstein-Uhlenbeck process (\ref{OU-1}) and our recent works \cite{JL1,Lv}, we first study the following affine stochastic functional differential equations (SFDEs)
\begin{equation}\label{SFDE}
  dx(t)=L(x_t)dt+\Sigma dB(t),\quad t\geq0,
\end{equation}
\begin{equation}\label{SFDE2}x(s)=x_0(s)=\xi(s),\quad s\in[-\tau,0]\ {\rm and}\ \xi\in C_\tau,
\end{equation}
where $\tau\geq0$, $C_\tau:=C\bigl([-\tau,0],\mathbb{R}^n\bigr)$ denotes the Banach space of continuous functions $\xi:[-\tau,0]\rightarrow\mathbb{R}^n$ equipped with the supremum norm $\|\xi\|_{C_\tau}=\sup_{-\tau\leq s\leq0}\bigl|\xi(s)\bigr|$, $x_t\in C_\tau$ is defined by $x_t(s)=x(t+s)$ for $-\tau\leq s\leq 0$, $L:C_\tau\rightarrow\mathbb{R}^n$ is a bounded linear functional, $\Sigma\in\mathbb{R}^{n\times m}$ and $B(t)=(B_1(t),\ldots,B_m(t))^{\rm T}$ is an $m$-dimensional two-sided Wiener process on the canonical probability space $(\Omega,\mathscr{F},\mathbb{P})$. Here, $\mathscr{F}$ is the Borel $\sigma$-algebra of
$\Omega=C_0(\mathbb{R},\mathbb{R}^m)=\bigl\{\omega=(\omega_1,\omega_2,\ldots,\omega_m)\in C(\mathbb{R},\mathbb{R}^m),\ \omega(0)=0\bigr\}$, which is equipped with the following metric
\[\varrho(\omega,\omega^\ast):=\sum_{k=1}^\infty\frac{1}{2^k}\frac{\varrho_k(\omega,\omega^\ast)}{1+\varrho_k(\omega,\omega^\ast)},\quad
\varrho_k(\omega,\omega^\ast)=\max_{t\in[-k,k]}\bigl|\omega(t)-\omega^\ast(t)\bigr|,\]
and $\mathbb{P}$ is the corresponding Wiener measure. In what follows, let us introduce some basic notations, we define the Euclidean norm $|x|:=\bigl(\sum_{i=1}^n|x_i|^2\bigr)^{1/2}$, $x\in\mathbb{R}^n$, where $\mathbb{R}^n$ is the $n$-dimensional Euclidean space. For any  matrix $A=(A_{ij})_{n\times m}\in\mathbb{R}^{n\times m}$, let $\|A\|:=\bigl(\sum_{i=1}^n\sum_{j=1}^m|A_{ij}|^2\bigr)^{1/2}$, where $\mathbb{R}^{n\times m}$ denotes the space of all $n\times m$-dimensional matrices with real entries. $\mathbb{C}$ stands for the set of complex numbers and ${\rm Re}(z)$ denotes the real part of $z\in\mathbb{C}$.

Observe that $L$ is a bounded linear mapping from $C_\tau$ to $\mathbb{R}^n$ with the operator topology, then the Riesz representation theorem shows that there exists an $\mathbb{R}^{n\times n}$-valued measure $\mu=(\mu_{ij})_{n\times n}\in\mathscr{M}([-\tau,0],\mathbb{R}^{n\times n})$ such that
\begin{equation}\label{Riesz}
L(\phi)=\int_{-\tau}^{0}\mu(ds)\phi(s)
\end{equation}
for all $\phi\in C_\tau$, where $\mathscr{M}([-\tau,0],\mathbb{R}^{n\times n})$ denotes the space of finite signed Borel measures on $[-\tau,0]$ with values in $\mathbb{R}^{n\times n}$. Let $\mathscr{B}$ be the Borel $\sigma$-algebra on $[-\tau,0]$, the measure $|\mu|$ is called the total variation measure, and the function $|\mu|:\mathscr{B}\rightarrow\mathbb{R}_+$ is defined by
\[|\mu|(A)\triangleq\sup\sum_{i=1}^{N}|\mu(A_i)|,\]
where $\{A_i\}_{i=1}^{N}\subset\mathscr{B}$ is a partition of $A\in\mathscr{B}$ and the supremum is taken over all partitions of $A$.

With the help of (\ref{Riesz}), we can consider the linear part of (\ref{SFDE}) and (\ref{SFDE2})
\begin{equation}\label{LSFDE}
  dy(t)=\int_{-\tau}^{0}\mu(ds)y(t+s),\quad t\geq0,
\end{equation}
\begin{equation}\label{LSFDE2}y(s)=y_0(s)=\xi(s),\quad s\in[-\tau,0]\ {\rm and}\ \xi\in C_\tau,
\end{equation}
which is a deterministic delay equation. It is well known that the unique solution of (\ref{LSFDE}) and (\ref{LSFDE2}) denoted by $y(t,\xi)$ has the following form
\begin{equation}\label{LSFDE3}
y(t,\xi)=r(t)\xi(0)+\int_{-\tau}^0\int_{-\tau}^{u}r(t+s-u)\mu(ds)\xi(u)du, \quad t\geq0,
\end{equation}
see Theorem 1.2 in \cite[Chapter 6]{HL}, where $r(t)$ is the fundamental solution of (\ref{LSFDE}) with values $r(0)=\mathbf{I}_{n\times n}$ and $r(t)=\mathbf{0}_{n\times n}$ for $t\in[-\tau,0)$. Here, $\mathbf{I}_{n\times n}$  is the $n\times n$-dimensional identity matrix and $\mathbf{0}_{n\times n}$  is the $n\times n$-dimensional zero matrix.
Let
\begin{equation}\label{Eigen}
\alpha_0\triangleq\sup\left\{{\rm Re}(\lambda):\lambda\in\mathbb{C},\det\left(\lambda\mathbf{I}_{n\times n}-\int_{-\tau}^0e^{\lambda s}\mu(ds)\right)=0\right\},
\end{equation}
where $\det(A)$ denotes the determinant of an  $n\times n$-dimensional matrix $A$. Furthermore, by Theorem 3.2 in \cite[Chapter 9]{HL}, for any $\alpha<-\alpha_0$, there exists a constant $C_\alpha$ such that
\begin{equation}\label{Eigen2}
\|r(t)\|\leq C_\alpha e^{-\alpha t},\quad t\geq-\tau.
\end{equation}
Therefore, combining (\ref{LSFDE3}) and the variation-of-constants formula, see Lemma 6.1 in \cite{RRG}, the solution of $\{x(t,\xi):t\geq-\tau\}$ of (\ref{SFDE}) and (\ref{SFDE2}) can be written as the following form
\begin{align}\label{VCF}
x(t,\xi)&=y(t,\xi)+\int_0^tr(t-s)\Sigma dB(s)\nonumber\\
&=r(t)\xi(0)+\int_{-\tau}^0\int_{-\tau}^{u}r(t+s-u)\mu(ds)\xi(u)du+\int_0^tr(t-s)\Sigma dB(s)
\end{align}
for $t\geq0$ and $x(t,\xi)=\xi(t)$ for $t\in[-\tau,0]$, where $r(t)$ is the resolvent of (\ref{LSFDE}) and (\ref{LSFDE2}).

The rest of the paper is organized as follows. Section 2 is devoted to the existence and global stability of generalized Ornstein-Uhlenbeck process for affine
SFDEs, and some basic and important properties are demonstrated. Section 3 gives the proof of main results presented in Section 2. Section 4 provides a rigorous and standard procedure for guaranteeing the global stability of nonlinear SFDEs with the help of generalized Ornstein-Uhlenbeck process. Section 5 gives the proof of main results presented in Section 4. Section 6 gives some examples to illustrate our main results. Section 7 ends this paper with some discussions and open problems.

\section{Generalized Ornstein-Uhlenbeck process of affine SFDEs}
In this section, we mainly consider the long-term behaviour of pullback trajectories for (\ref{SFDE}) and (\ref{SFDE2}) and some related properties. To this end, we need to show some definitions with respect to the theory of random dynamical systems (RDSs), and then give our main results at the end of this section. The reader is referred to \cite{A,Chu} for more details.
Let $X$ be a complete separable metric space (i.e., Polish space) equipped with the Borel $\sigma$-algebra $\mathscr{B}(X)$ and $(\Omega,\mathscr{F},\mathbb{P})$ be a probability space.

\begin{definition}
$\theta\equiv\bigl(\Omega,\mathscr{F},\mathbb{P},\{\theta_t,t\in\mathbb{R}\}\bigr)$ is called a metric dynamical system (MDS) if $\theta$ is a measurable flow:
\[\theta:\mathbb{R}\times\Omega\mapsto\Omega,\qquad \theta_0={\rm id},\qquad \theta_{t_2}\circ\theta_{t_1}=\theta_{t_1+t_2}\]
for all $t_1,t_2\in\mathbb{R}$, which is $\bigl(\mathscr{B}(\mathbb{R})\otimes\mathscr{F}, \mathscr{F}\bigr)$-measurable. Moreover, we assume that $\theta_t\mathbb{P}=\mathbb{P}$ for all $t\in\mathbb{R}$.
\end{definition}

\begin{definition}\label{DRDS}
An RDS on the state space $X$ with an MDS $\theta$ is a mapping
\[\varphi:\mathbb{R}_+\times\Omega\times X\mapsto X, \quad (t,\omega,x)\mapsto\varphi(t,\omega,x),\]
which is $\bigl(\mathscr{B}(\mathbb{R}_+)\otimes\mathscr{F}\otimes\mathscr{B}(X),
\mathscr{B}(X)\bigr)$-measurable such that for all $\omega\in\Omega$,
\begin{enumerate}[{\rm(i)}]
\item $\varphi(0,\omega,\cdot)$ is the identity on $X$;
\item $\varphi(t_1+t_2,\omega,x)=\varphi\bigl(t_2,\theta_{t_1}\omega,\varphi(t_1,\omega,x)\bigr)$ for all $t_1,t_2\in\mathbb{R}_+$ and $x\in X$;
\item the mapping $\varphi(t,\omega,\cdot):X\to X$ is continuous for all $t\in
\mathbb{R}_+$.
\end{enumerate}
\end{definition}

\begin{definition}\label{Tem}
A random variable $R(\omega)$ is said to be tempered with respect to the MDS $\theta\equiv\bigl(\Omega,\mathscr{F},\mathbb{P},\{\theta_t,t\in\mathbb{R}\}\bigr)$ if
\[\sup_{t\in\mathbb{R}}\bigl\{e^{-\gamma|t|}|R(\theta_{t}\omega)|\bigr\}<\infty\quad for\ \ all\ \ \omega\in\Omega\ \ and\ \ \gamma>0.\]
\end{definition}

\begin{definition}\label{Equili}
A random variable $v:\Omega\rightarrow X$ is said to be a random equilibrium (or stationary solution) of the RDS $(\theta,\varphi)$ if it is invariant with respect to  $(\theta,\varphi)$, i.e.
\[\varphi(t,\omega)v(\omega)=v(\theta_t\omega)\quad for\ \ all\ \ t\geq0\ \ and\ \ \omega\in\Omega.\]
\end{definition}

Motivated by \cite{JL1,JL3,KMen}, this section is devoted to the global stability of stochastic flows generated by (\ref{SFDE}) and (\ref{SFDE2}). Define $\varphi(t,\omega,\xi)=x_t(\omega,\xi)$ denote the segment process of the unique solution for (\ref{SFDE}) and (\ref{SFDE2}), which generates an RDS in the state space $C_\tau$, see Theorem 2.9 in \cite{BJX} or \cite{JL3}, we can have the following theorems.

\begin{theorem}\label{thm1} Assume that $\alpha_0<0$, then for any $\xi\in C_\tau$ and $\omega\in\Omega$,
\begin{equation}\label{Conclu1}
\lim_{t\rightarrow\infty}\varphi(t,\theta_{-t}\omega,\xi)(\bullet)=\int_{-\infty}^{\bullet}r(\bullet-u)\Sigma dB(u)
\end{equation}
in $C_\tau$. Furthermore, set
\begin{equation}\label{Conclu2}U(\omega)(s)=\int_{-\infty}^{s}r(s-u)\Sigma dB(u)\quad {\rm for}\quad-\tau\leq s\leq0,\end{equation}
it follows that the random variable $U(\omega)$ with values in $C_\tau$ is a random equilibrium, i.e.
\begin{equation}\label{Conclu3}\varphi\bigl(t,\omega,U(\omega)\bigr)=U(\theta_{t}\omega)\end{equation}
for all $t\geq0$ and $\omega\in\Omega$.
\end{theorem}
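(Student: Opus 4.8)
The plan is to insert the variation-of-constants formula (\ref{VCF}) into $\varphi(t,\theta_{-t}\omega,\xi)(\bullet)=x(t+\bullet,\theta_{-t}\omega,\xi)$ for $\bullet\in[-\tau,0]$ and $t$ large, and to separate the resulting expression into the deterministic part $y(t+\bullet,\xi)$ from (\ref{LSFDE3}) and a stochastic convolution. On the canonical Wiener space the shift acts by $B(s,\theta_{-t}\omega)=B(s-t,\omega)-B(-t,\omega)$, so after the change of variable $v=s-t$ the stochastic convolution $\int_0^{t+\bullet}r(t+\bullet-s)\Sigma\,dB(s,\theta_{-t}\omega)$ becomes exactly $\int_{-t}^{\bullet}r(\bullet-v)\Sigma\,dB(v,\omega)$. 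Thus the whole statement reduces to two facts: that the deterministic part tends to $0$ in $C_\tau$, and that the truncated stochastic integral converges to $U(\omega)(\bullet)=\int_{-\infty}^{\bullet}r(\bullet-v)\Sigma\,dB(v,\omega)$ uniformly in $\bullet\in[-\tau,0]$.

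For the deterministic part I would use the spectral bound (\ref{Eigen2}): since $\alpha_0<0$ one may fix $\alpha\in(0,-\alpha_0)$, and then $\|r(t+\bullet)\|\le C_\alpha e^{\alpha\tau}e^{-\alpha t}$, together with the analogous estimate inside the double integral of (\ref{LSFDE3}), forces $\sup_{\bullet\in[-\tau,0]}|y(t+\bullet,\xi)|\to 0$ as $t\to\infty$, uniformly in $\bullet$. The stochastic part is the crux. I would first define the Wiener integral pathwise through integration by parts, $\int_{a}^{\bullet}r(\bullet-v)\Sigma\,dB(v)=r(0)\Sigma B(\bullet)-r(\bullet-a)\Sigma B(a)-\int_{a}^{\bullet}\bigl(d_v r(\bullet-v)\bigr)\Sigma B(v)$, which is legitimate because $r$ is continuous and of bounded variation on $[0,\infty)$ and the integrand is deterministic; this is what makes the limit meaningful for every $\omega$ rather than merely almost surely. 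To control both the existence of $U(\omega)$ and the uniform convergence I would combine $\|r(t)\|\le C_\alpha e^{-\alpha t}$ with the companion decay $\|r'(t)\|\le |\mu|([-\tau,0])\,C_\alpha e^{\alpha\tau}e^{-\alpha t}$, obtained from $r'(t)=\int_{-\tau}^0\mu(ds)r(t+s)$, and the sub-polynomial growth of Brownian paths. Concretely, the tail $\sup_{\bullet\in[-\tau,0]}\bigl|\int_{-\infty}^{-t}r(\bullet-v)\Sigma\,dB(v)\bigr|$ is dominated through the integration-by-parts representation by a boundary term $\|r(\bullet+t)\|\,|B(-t)|$ and an integral $\int_{-\infty}^{-t}\|r'(\bullet-v)\|\,|B(v)|\,dv$, each bounded uniformly in $\bullet$ by an exponentially small factor times a sub-polynomial power of $|B|$, hence tending to $0$. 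This simultaneously shows that $U(\omega)$ is well defined and $C_\tau$-valued and that (\ref{Conclu1}) holds in $C_\tau$.

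The random-equilibrium identity (\ref{Conclu3}) then follows from the convergence just proved together with the cocycle property and continuity of $\varphi(t,\omega,\cdot)$ in Definition \ref{DRDS}. Using $\theta_s\circ\theta_{-s}=\mathrm{id}$, the cocycle property gives $\varphi(t,\omega,\varphi(s,\theta_{-s}\omega,\xi))=\varphi(t+s,\theta_{-s}\omega,\xi)$, so by continuity $\varphi(t,\omega,U(\omega))=\lim_{s\to\infty}\varphi(t+s,\theta_{-s}\omega,\xi)$. On the other hand, reparametrising $r=t+s$ in $U(\theta_t\omega)=\lim_{r\to\infty}\varphi(r,\theta_{t-r}\omega,\xi)$ produces the same limit, whence $\varphi(t,\omega,U(\omega))=U(\theta_t\omega)$.

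The main obstacle I anticipate is making the stochastic-integral convergence uniform in $\bullet$ and valid pathwise for every $\omega$. The $L^2$-estimate via the It\^o isometry is immediate and yields mean-square convergence with an $O(e^{-2\alpha t})$ rate, but upgrading to the uniform, everywhere-defined pathwise statement requires the integration-by-parts representation, the auxiliary decay bound on $r'$, and some care at the jump of $r$ at the origin; fixing the remaining null set so that invariance holds for all $\omega$ is a routine perfection that must nonetheless be acknowledged.
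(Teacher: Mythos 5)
Your proposal follows essentially the same route as the paper's proof: the same decomposition of $\varphi(t,\theta_{-t}\omega,\xi)$ via the variation-of-constants formula and the Wiener shift into a decaying deterministic part plus $\int_{-t}^{\bullet}r(\bullet-u)\Sigma\,dB(u)$, the same pathwise integration-by-parts control of the tail integral using $r'(t)=\int_{-\tau}^0\mu(ds)\,r(t+s)$ and the law of the iterated logarithm, and the same cocycle-plus-continuity argument for (\ref{Conclu3}). The only cosmetic difference is that the paper handles the exceptional null set once and for all in a remark, by restricting to the $\theta$-invariant full-measure set $\Omega^*$ on which the law of the iterated logarithm holds, which is exactly the ``routine perfection'' step you flag at the end.
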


\begin{remark}
{\rm For any $-\tau\leq s\leq0$, the random variable $U(\omega)(s)$ is well defined. Note that $\alpha_0<0$, fix $s\in[-\tau,0]$, using the same procedure in \cite{JL1} and Doob's martingale convergence theorem, see {\rm \cite[Problem 3.20 in Chap. 1]{KS}},  we conclude that $\int_{-t}^{s}r(s-u)\Sigma dB(u)$ will convergent to $U(\omega)(s)$ as $t\rightarrow\infty$ for $\omega\in\Omega$.
}
\end{remark}

\begin{remark}
{\rm In the proof of Theorem \ref{thm1}, we need to use the law of iterated logarithm, i.e.
\[
\limsup_{t\rightarrow\pm\infty}\frac{|B(t)|}{\sqrt{2|t|\log\log|t|}}=1\quad \mathbb{P}\mbox{-a.s.},
\]
which together with the definition of $\theta$ implies that
\begin{equation}\label{Law}\Omega^*=\left\{\omega\Bigg|\limsup_{t\rightarrow\pm\infty}\frac{|B(t,\omega)|}{\sqrt{2|t|\log\log|t|}}=1,\ \omega\in\Omega\right\}\end{equation}
is a $\theta$-invariant set of full measure. From Remark 1.2.1 in \cite{Chu}, we can choose an indistinguishable form of $\varphi$ which coincides on the $\theta$-invariant set $\Omega^*$. In the rest of this paper, by a slight abuse of notation, we still write $\Omega$ instead of $\Omega^*$.
}
\end{remark}

\begin{remark}
{\rm In Theorem \ref{thm1}, we prove that if the spectral radius of characteristic equations is negative, all pullback trajectories will convergent to the globally stable stationary solution $U(\omega)$ in the state space $C_\tau$. Moreover, the process  $U(\theta_{t}\omega)$ can be regarded as the generalized Ornstein-Uhlenbeck process for affine SFDEs, which includes the corresponding results (in the sense of distributions) in \cite{KMen}.  In fact, the bounded linear functional $L$ given in \cite{KMen} is one dimensional and has a special form
\[L(x_t)=ax(t)+bx(t-\tau),\ a\in\mathbb{R},\ b\in\mathbb{R}.\]
}
\end{remark}

\begin{proposition}\label{prop1}  Assume that $\alpha_0<0$, let $U(\omega)$ be defined in Theorem \ref{thm1}, we have the following properties:
\begin{enumerate}[{\rm(i)}]
\item The random variable $U:\Omega\rightarrow C_\tau$ is $\mathscr{F}_-$-measurable and
\begin{equation}\label{OU1}
\lim_{t\rightarrow\pm\infty}\frac{\|U(\theta_t\omega)\|_{C_\tau}}{|t|}=0\quad {\rm for}\quad \omega\in\Omega,\end{equation}
where $\mathscr{F}_-=\sigma\bigl\{\omega\mapsto B(t,\omega):t\leq0\bigr\}$ is the past $\sigma$-algebra;
\item $t\rightarrow U(\theta_t\omega)$ is continuous from $\mathbb{R}$ to $C_\tau$ for all $\omega\in\Omega$ and the random variable $U:\Omega\rightarrow C_\tau$ is tempered, i.e.
   \begin{equation}\label{OU2}\sup_{t\in\mathbb{R}}\bigl\{e^{-\gamma|t|}\|U(\theta_t\omega)\|_{C_\tau}\bigr\}<\infty\quad for\ \ all\ \ \omega\in\Omega\ \ and\ \ \gamma>0;\end{equation}
\item The process
\begin{equation}\label{OU3}
\varphi\bigl(t,\omega,U(\omega)\bigr)(0)=U(\theta_{t}\omega)(0)=\int_{-\infty}^{t}r(t-u)\Sigma dB(u),\ t\geq0\end{equation}
is the solution of (\ref{SFDE}) with the initial value $U(\omega)$, i.e.
\begin{equation}\label{OU3-2}
  d[U(\theta_{t}\omega)(0)]=L\bigl(U(\theta_{t}\omega)\bigr)dt+\Sigma dB(t),\quad t\geq0;
\end{equation}
\item For any $-\tau\leq s\leq0$, $\mathbb{E}[U(\omega)(s)]=0$,
\begin{equation}\label{OU4}
\mathbb{E}|U(\omega)(s)|^2=\int_{-\infty}^{s}\left|r(s-u)\Sigma\right|^2du=\int_{-\infty}^{0}\left|r(-u)\Sigma\right|^2du\leq\frac{C_\alpha^2\|\Sigma\|^2}{2\alpha}
\end{equation}
and then
\begin{equation}\label{OU5}
\mathbb{E}|U(\omega)(s)|\leq\left(\mathbb{E}|U(\omega)(s)|^2\right)^\frac12\leq\frac{C_\alpha\|\Sigma\|}{\sqrt{2\alpha}},
\end{equation}
where $\alpha\in(0,-\alpha_0)$ and $\mathbb{E}|U(\omega)(s)|^2$ is independent of $s$;
\item
\begin{equation}\label{OU6}
\mathbb{E}\|U(\omega)\|_{C_\tau}\leq2m\sqrt\tau\|\Sigma\|+\sqrt{\frac{2}{\pi\alpha^3}}me^{\alpha\tau}C_\alpha\|\Sigma\|\Gamma\left(\frac32\right)\int_{-\tau}^0e^{-\alpha\rho}|\mu|(d\rho)
\end{equation}
and
\begin{equation}\label{OU7}
\mathbb{E}\|U(\omega)\|_{C_\tau}^2\leq8m\tau\|\Sigma\|^2+\frac{2m}{\alpha^3}e^{2\alpha\tau}C_\alpha^2\|\Sigma\|^2\left|\int_{-\tau}^0e^{-\alpha\rho}|\mu|(d\rho)\right|^2.
\end{equation}
\end{enumerate}
\end{proposition}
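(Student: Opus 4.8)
The plan is to derive everything from the single shift identity
\[U(\theta_t\omega)(s)=\int_{-\infty}^{t+s}r(t+s-u)\Sigma\,dB(u),\]
which I would record first: it follows from the Wiener-shift relation $B(\cdot,\theta_t\omega)=B(\cdot+t,\omega)-B(t,\omega)$ and the substitution $v=u+t$ in (\ref{Conclu2}); note the consistency $U(\theta_{t+s}\omega)(0)=U(\theta_t\omega)(s)$, which makes $U(\theta_t\omega)$ the genuine time-$t$ segment of $t\mapsto U(\theta_t\omega)(0)$. For the $\mathscr{F}_-$-measurability in (i), observe that for $s\le0$ the truncated integrals $\int_{-T}^{s}r(s-u)\Sigma\,dB(u)$ use only the increments of $B$ on $(-\infty,0]$ and are therefore $\mathscr{F}_-$-measurable; as noted after Theorem~\ref{thm1} they converge to $U(\omega)(s)$, and a pointwise limit of $\mathscr{F}_-$-measurable maps into the Polish space $C_\tau$ is $\mathscr{F}_-$-measurable.

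The growth statement (\ref{OU1}) and the temperedness (\ref{OU2}) I would prove together, with (\ref{OU1}) first. Integrating by parts in the shift identity expresses $U(\theta_t\omega)(s)$ pathwise through $B$ and $\dot r$; invoking the law of the iterated logarithm on the full-measure invariant set $\Omega^{*}$ of (\ref{Law}) to bound $|B(u)|$ by $\sqrt{|u|\log\log|u|}$, and combining with the exponential estimate (\ref{Eigen2}) for $\|r\|$ (hence for $\|\dot r\|$), the exponential decay dominates and one gets $\|U(\theta_t\omega)\|_{C_\tau}=O\bigl(\sqrt{|t|\log\log|t|}\bigr)$. This sublinear bound yields (\ref{OU1}) after dividing by $|t|$, and sublinear growth is in particular sub-exponential, so (\ref{OU2}) follows for every $\gamma>0$. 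The continuity of $t\mapsto U(\theta_t\omega)$ into $C_\tau$ in (ii) comes from path-continuity of the stochastic integral in its upper limit together with the continuity of $r$, or from (\ref{Conclu1}) and the cocycle property of $\varphi$.

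For (iii) the cleanest route uses the invariance (\ref{Conclu3}) and the consistency above: $U(\theta_t\omega)$ is exactly the segment $x_t$ of the solution of (\ref{SFDE}) started at $U(\omega)$, so $x(t)=U(\theta_t\omega)(0)$ solves (\ref{SFDE}) with drift $L(x_t)=L(U(\theta_t\omega))$, which is (\ref{OU3-2}); alternatively one differentiates $\int_{-\infty}^{t}r(t-u)\Sigma\,dB(u)$ directly, the boundary term giving $r(0)\Sigma\,dB(t)=\Sigma\,dB(t)$ and the fundamental-solution identity $\dot r(\cdot)=\int_{-\tau}^{0}\mu(d\rho)\,r(\cdot+\rho)$ together with a stochastic Fubini argument giving the drift $L(U(\theta_t\omega))\,dt$. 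Item (iv) is routine: the It\^o integral of a deterministic integrand has mean zero; the It\^o isometry gives $\mathbb{E}|U(\omega)(s)|^2=\int_{-\infty}^{s}\|r(s-u)\Sigma\|^2\,du$; the substitution $w=s-u$ shows this is independent of $s$ and equals $\int_{0}^{\infty}\|r(w)\Sigma\|^2\,dw$; and (\ref{Eigen2}) with $\|r(w)\Sigma\|\le C_\alpha e^{-\alpha w}\|\Sigma\|$ gives the stated bound, with (\ref{OU5}) being Cauchy--Schwarz.

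The main obstacle is (v), where one must control the supremum $\sup_{-\tau\le s\le0}|U(\omega)(s)|$ of a stochastic process, not just pointwise second moments; the difficulty is that the fresh-noise part of $r(s-u)$ near $u=s$, where $r\approx r(0)=I$, is not integrable if extended to $-\infty$, so the identity cannot simply be peeled off. I would split the integral at $u=s-\tau$ and use $r(s-u)=I+\int_{0}^{s-u}\dot r(v)\,dv$ only on the bounded window $u\in[s-\tau,s]$, writing
\[U(\omega)(s)=\Sigma\bigl(B(s)-B(s-\tau)\bigr)+\int_{s-\tau}^{s}\Bigl(\int_{0}^{s-u}\dot r(v)\,dv\Bigr)\Sigma\,dB(u)+\int_{-\infty}^{s-\tau}r(s-u)\Sigma\,dB(u).\]
The first term is a Brownian increment over a window of length $\tau$; Doob's maximal inequality applied componentwise over the $m$ driving processes (with $\|\Sigma\|$ bounding the columns) produces the $2m\sqrt\tau\|\Sigma\|$ in (\ref{OU6}) and, through its $L^2$ form, the $8m\tau\|\Sigma\|^2$ in (\ref{OU7}). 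For the two memory terms I would use $\|\dot r(v)\|\le C_\alpha e^{-\alpha v}\int_{-\tau}^{0}e^{-\alpha\rho}|\mu|(d\rho)$, read off from (\ref{Eigen2}) and $\dot r(v)=\int_{-\tau}^0\mu(d\rho)\,r(v+\rho)$ (the factor $e^{\alpha\tau}$ coming from the window shift), and then estimate the expected supremum of the resulting centred Gaussian integral componentwise via the Gaussian moment constants $\mathbb{E}|N(0,1)|=\sqrt{2/\pi}$ and $\Gamma(3/2)$, with the It\^o isometry used for the $L^2$ bound. The delicate points are the integrable splitting and the Gaussian supremum estimates; the exact constants are then bookkeeping.
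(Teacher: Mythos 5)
Items (i)--(iv) of your proposal follow essentially the paper's own route (integration by parts plus the law of the iterated logarithm on the invariant set (\ref{Law}) for the growth and continuity statements, the invariance (\ref{Conclu3}) for (iii), It\^{o} isometry for (iv)), and they are sound. One remark before the main point: your stated motivation for the splitting in (v) is misplaced. Integration by parts over the whole half-line $(-\infty,s]$ is perfectly legitimate here: the boundary term at $-\infty$ vanishes because the LIL growth of $B$ is killed by the exponential decay (\ref{Eigen3}), and what remains is $\Sigma B(s)$ plus a \emph{pathwise Lebesgue} integral of $B(u)$ against the exponentially decaying kernel $\int_{-\tau}^0\mu(d\rho)r(s-u+\rho)$; the non-integrable constant part of $r$ never appears. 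This is exactly identity (\ref{eq11}) in the paper, and it is the engine of the paper's proof of (v).

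The genuine gap is in your part (v), at precisely the step you flag as delicate and then do not carry out. After your splitting, the middle term $\int_{s-\tau}^{s}\bigl(\int_0^{s-u}r'(v)\,dv\bigr)\Sigma\,dB(u)$ and the tail $\int_{-\infty}^{s-\tau}r(s-u)\Sigma\,dB(u)$ are still stochastic integrals indexed by $s$. What (\ref{OU6}) and (\ref{OU7}) require is $\mathbb{E}\sup_{-\tau\leq s\leq0}$ of these terms, and this cannot be obtained ``componentwise via the Gaussian moment constants'': pointwise-in-$s$ moments (It\^{o} isometry, $\mathbb{E}|N(0,1)|=\sqrt{2/\pi}$) control $\sup_s\mathbb{E}|\cdot|$, not $\mathbb{E}\sup_s|\cdot|$, and the entire difficulty of (v) --- as the paper stresses, the process $s\mapsto\int_{-\infty}^{s}r(s-u)\Sigma\,dB(u)$ is not a martingale --- is that these cannot be interchanged. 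To close the gap you must first convert both terms into pathwise integrals: stochastic Fubini turns the middle term into $\int_0^{\tau}r'(v)\Sigma\bigl(B(s-v)-B(s-\tau)\bigr)dv$, and integration by parts handles the tail; only then can the supremum over $s$ be absorbed into a deterministic kernel so that ordinary Fubini, Doob's inequality and $\mathbb{E}|B_i(u)|=\sqrt{-2u/\pi}$ apply. But that is exactly what the paper's single global integration by parts accomplishes in one stroke, which makes your domain splitting redundant rather than an alternative. A secondary defect: even once repaired, your decomposition does not reproduce the stated constants. For instance $\sup_{-\tau\leq s\leq0}\bigl|B(s)-B(s-\tau)\bigr|$ involves $B$ on $[-2\tau,0]$, and Doob's inequality then yields a bound of order $4\sqrt{2}\,m\sqrt{\tau}\|\Sigma\|$ rather than the $2m\sqrt{\tau}\|\Sigma\|$ appearing in (\ref{OU6}), so as written your argument would not establish the inequalities (\ref{OU6})--(\ref{OU7}) with their stated right-hand sides.
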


\begin{theorem}\label{thm1-1} Assume that $\alpha_0<0$, then for any $\xi\in C_\tau$ and $\omega\in\Omega$,
\begin{equation}\label{Conclu1-1}
\lim_{t\rightarrow\infty}\mathbb{E}\|\varphi(t,\theta_{-t}\omega,\xi)-U(\omega)\|^2_{C_\tau}
=0,
\end{equation}
where $U(\omega)$ is given in (\ref{Conclu2}).
\end{theorem}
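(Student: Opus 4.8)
The plan is to produce an explicit representation of the pullback difference and then estimate its mean-square $C_\tau$-norm term by term. Applying the variation-of-constants formula (\ref{VCF}) at the evaluation time $t+s$ with $\omega$ replaced by $\theta_{-t}\omega$, and using the shift relation $B(w,\theta_{-t}\omega)=B(w-t,\omega)-B(-t,\omega)$ of the two-sided Wiener process together with the substitution $v=w-t$ in the stochastic integral, I obtain for $s\in[-\tau,0]$ and $t\ge\tau$
\[
\varphi(t,\theta_{-t}\omega,\xi)(s)=D_t(s)+\int_{-t}^{s}r(s-v)\Sigma\,dB(v),
\]
where $D_t(s):=r(t+s)\xi(0)+\int_{-\tau}^{0}\int_{-\tau}^{u}r(t+s+\eta-u)\mu(d\eta)\xi(u)\,du$ is deterministic. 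Since $U(\omega)(s)=\int_{-\infty}^{s}r(s-v)\Sigma\,dB(v)$ by (\ref{Conclu2}), subtracting yields the clean splitting
\[
\varphi(t,\theta_{-t}\omega,\xi)(s)-U(\omega)(s)=D_t(s)-M_t(s),\qquad M_t(s):=\int_{-\infty}^{-t}r(s-v)\Sigma\,dB(v).
\]
By $\|a-b\|^2\le2\|a\|^2+2\|b\|^2$ it then suffices to prove $\sup_{s}|D_t(s)|^2\to0$ and $\mathbb{E}\sup_{s}|M_t(s)|^2\to0$ as $t\to\infty$.

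For the deterministic term I would invoke the spectral decay bound (\ref{Eigen2}). For $t\ge\tau$ every argument of $r$ occurring in $D_t(s)$ is at least $t-2\tau\ge-\tau$, so $\|r(t+s)\|\le C_\alpha e^{\alpha\tau}e^{-\alpha t}$ and $\|r(t+s+\eta-u)\|\le C_\alpha e^{-\alpha(t+s+\eta-u)}$; bounding the double integral against the total variation measure $|\mu|$ then gives $\sup_{-\tau\le s\le0}|D_t(s)|\le C\,e^{-\alpha t}\|\xi\|_{C_\tau}$ for a constant $C$ depending only on $\alpha,C_\alpha,\tau,\mu$, which tends to $0$ because $\alpha\in(0,-\alpha_0)$.

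The heart of the matter is the uniform-in-$s$ control of the Gaussian tail $M_t(s)$. The pointwise estimate is immediate from the It\^o isometry, $\mathbb{E}|M_t(s)|^2=\int_{-\infty}^{-t}\|r(s-v)\Sigma\|^2\,dv\le C\,e^{-2\alpha t}$ uniformly in $s$, so the real issue is moving the supremum inside the expectation. I plan to write $\sup_{s}|M_t(s)|\le|M_t(-\tau)|+\sup_{s}|M_t(s)-M_t(-\tau)|$ and to exploit that, for $t$ large, every argument $s-v\ge t-\tau$ is positive, so the fundamental solution is differentiable there with $\dot r(\rho-v)=\int_{-\tau}^{0}\mu(d\eta)r(\rho+\eta-v)$. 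Using $r(s-v)-r(-\tau-v)=\int_{-\tau}^{s}\dot r(\rho-v)\,d\rho$ and the stochastic Fubini theorem I get
\[
M_t(s)-M_t(-\tau)=\int_{-\tau}^{s}N_t(\rho)\,d\rho,\qquad N_t(\rho):=\int_{-\infty}^{-t}\dot r(\rho-v)\Sigma\,dB(v),
\]
so that $\sup_{s}|M_t(s)-M_t(-\tau)|\le\int_{-\tau}^{0}|N_t(\rho)|\,d\rho$. By Cauchy--Schwarz and the It\^o isometry, $\mathbb{E}\sup_{s}|M_t(s)-M_t(-\tau)|^2\le\tau\int_{-\tau}^{0}\mathbb{E}|N_t(\rho)|^2\,d\rho$, and the bound $\|\dot r(\rho-v)\|\le C_\alpha e^{\alpha\tau}|\mu|([-\tau,0])\,e^{-\alpha(\rho-v)}$ yields $\mathbb{E}|N_t(\rho)|^2\le C\,e^{-2\alpha t}$ uniformly in $\rho$. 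Combining this with the pointwise bound on $\mathbb{E}|M_t(-\tau)|^2$ gives $\mathbb{E}\sup_{s}|M_t(s)|^2\le C\,e^{-2\alpha t}\to0$, which together with the deterministic estimate proves (\ref{Conclu1-1}).

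I expect the main obstacle to be precisely this passage of the supremum inside the expectation: one must justify the stochastic Fubini interchange (valid since $\dot r(\rho-v)\Sigma$ is jointly measurable and square-integrable on $[-\tau,0]\times(-\infty,-t]$ by the exponential decay) and the differentiability of $r$ on $(0,\infty)$, both available once $t$ is large enough that all arguments exceed $\tau$. An alternative would be a Kolmogorov-continuity argument based on the Gaussian moment bound $\mathbb{E}|M_t(s_1)-M_t(s_2)|^{2p}\le C_p\bigl(\int_{-\infty}^{-t}\|[r(s_1-v)-r(s_2-v)]\Sigma\|^2\,dv\bigr)^{p}$, but the Fubini route displays the explicit rate $e^{-2\alpha t}$ most transparently.
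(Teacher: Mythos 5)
Your proof is correct, but it takes a genuinely different route from the paper's. The paper deduces Theorem \ref{thm1-1} from the pathwise pullback convergence already established in Theorem \ref{thm1}, combined with Lebesgue's dominated convergence theorem; all the work there goes into constructing a $t$-uniform dominating random variable $F_\xi\in L^2(\Omega,\mathscr{F},\mathbb{P};\mathbb{R}_+)$, which is obtained by a pathwise integration by parts (using the law of the iterated logarithm to control $B$ near $-\infty$), reducing matters to $\sup_{-\tau\leq s\leq0}|B(s)|$ and $\sup_{t\geq0}\bigl\{e^{-\frac{\alpha}{2}t}|B(-t)|\bigr\}$, whose square-integrability follows from Doob's maximal inequality and a summation over unit intervals (see (\ref{eq20-4})--(\ref{eq20-8})). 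You bypass both Theorem \ref{thm1} and dominated convergence: you isolate the Gaussian tail $M_t(s)=\int_{-\infty}^{-t}r(s-v)\Sigma\,dB(v)$ and estimate $\mathbb{E}\sup_{-\tau\leq s\leq0}|M_t(s)|^2$ directly, using the derivative formula (\ref{eq9}), a stochastic Fubini interchange, Cauchy--Schwarz and the It\^{o} isometry. Your approach buys an explicit exponential rate $O(e^{-2\alpha t})$ in (\ref{Conclu1-1}), which the dominated-convergence argument cannot produce, and it avoids the law of the iterated logarithm entirely, working purely at the $L^2$ level; the stochastic Fubini step is harmless here since the integrand is deterministic and exponentially decaying, so it can be justified by an elementary $L^2$-approximation argument. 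What the paper's approach buys is reusability: the same envelope technique (cf. Lemma \ref{lem5}) is recycled for the nonlinear system in Theorem \ref{thm3}, where no explicit solution formula, and hence no analogue of your decomposition, is available. The two steps you leave implicit---that $M_t(\cdot)$ admits a continuous, hence sup-measurable, version, and that the identity $M_t(s)=M_t(-\tau)+\int_{-\tau}^{s}N_t(\rho)\,d\rho$ holds a.s.\ simultaneously for all $s$---are both settled either by the Fubini representation itself or by noting that $M_t=D_t-\varphi(t,\theta_{-t}\omega,\xi)+U(\omega)$ is a difference of $C_\tau$-valued continuous functions, so there is no genuine gap.
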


\begin{corollary}\label{cor1} Assume that $\alpha_0<0$, then for any $\xi\in C_\tau$ and $\omega\in\Omega$,
\begin{equation}\label{Conclu1-2}
\lim_{t\rightarrow\infty}\mathbb{E}\|\varphi(t,\theta_{-t}\omega,\xi)-U(\omega)\|_{C_\tau}
=0,
\end{equation}
where $U(\omega)$ is given in (\ref{Conclu2}).
\end{corollary}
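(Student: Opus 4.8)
The plan is to obtain this $L^1$-convergence as an immediate consequence of the mean-square convergence already proved in Theorem \ref{thm1-1}, by exploiting the fact that on a probability space the $L^1$-norm is dominated by the $L^2$-norm.

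First I would fix $\xi\in C_\tau$ and $\omega\in\Omega$ and regard $\|\varphi(t,\theta_{-t}\omega,\xi)-U(\omega)\|_{C_\tau}$ as a nonnegative random variable depending on the parameter $t$. Since $\mathbb{P}(\Omega)=1$, Lyapunov's inequality (equivalently, the Cauchy--Schwarz inequality applied to this random variable against the constant $1$) gives
\[
\mathbb{E}\|\varphi(t,\theta_{-t}\omega,\xi)-U(\omega)\|_{C_\tau}
\leq\Bigl(\mathbb{E}\|\varphi(t,\theta_{-t}\omega,\xi)-U(\omega)\|_{C_\tau}^{2}\Bigr)^{1/2}.
\]
Next I would let $t\to\infty$ on the right-hand side. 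By Theorem \ref{thm1-1} the expression under the square root tends to $0$, and since $x\mapsto\sqrt{x}$ is continuous at $0$, the upper bound tends to $0$ as well. As the left-hand side is nonnegative and dominated by a quantity converging to $0$, a squeeze argument yields (\ref{Conclu1-2}).

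I do not anticipate any genuine obstacle here: all of the analytical difficulty---the pullback estimates and the convergence of the stochastic convolution---has already been absorbed into Theorem \ref{thm1-1}. The passage from $L^2$ to $L^1$ is legitimate precisely because we work on a finite (probability) measure space, so no additional uniform integrability or tail control is required; the monotonicity of $L^p$-norms in $p$ does all the remaining work.
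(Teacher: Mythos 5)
Your proposal is correct and follows exactly the paper's own argument: the paper also deduces Corollary \ref{cor1} from Theorem \ref{thm1-1} via H\"{o}lder's inequality (your Lyapunov/Cauchy--Schwarz step $\mathbb{E}|X|\leq(\mathbb{E}|X|^2)^{1/2}$ is the same inequality on a probability space). Nothing further is needed.
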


\section{Proofs of main results in Section 2}
\noindent
{\bf Proof of Theorem \ref{thm1}.}
First, by (\ref{VCF}) and the definition of wiener shift $\theta$, we can easily have that for all $t\geq\tau$ and $-\tau\leq s\leq0$
\begin{align}\label{eq1}
&\varphi(t,\omega,\xi)(s)\nonumber\\
=&r(t+s)\xi(0)+\int_{-\tau}^0\int_{-\tau}^{u}r(t+s+\rho-u)\mu(d\rho)\xi(u)du+\int_0^{t+s}r(t+s-\rho)\Sigma dB(\rho),
\end{align}
and then
\begin{align}\label{eq2}
&\varphi(t,\theta_{-t}\omega,\xi)(s)\nonumber\\
=&r(t+s)\xi(0)+\int_{-\tau}^0\int_{-\tau}^{u}r(t+s+\rho-u)\mu(d\rho)\xi(u)du+\int_{-t}^sr(s-\rho)\Sigma dB(\rho).
\end{align}
Since $\alpha_0<0$, for any $\alpha\in(0,-\alpha_0)$, from (\ref{Eigen2}), it follows that
\begin{equation}\label{Eigen3}
\|r(t)\|\leq C_\alpha e^{-\alpha t},\quad t\geq-\tau,
\end{equation}
where $C_\alpha$ is a positive constant. Note that $t+s+\rho-u\geq-\tau$ for $t\geq\tau$, $-\tau\leq s\leq0$ and $-\tau\leq\rho\leq\rho-u\leq0$, using (\ref{Eigen3}), it is clear that
\begin{equation}\label{eq3}
\sup_{-\tau\leq s\leq0}|r(t+s)\xi(0)|\leq e^{\alpha\tau}C_\alpha e^{-\alpha t}\|\xi\|_{C_\tau}
\end{equation}
and
\begin{align}\label{eq4}
&\sup_{-\tau\leq s\leq0}\left|\int_{-\tau}^0\int_{-\tau}^{u}r(t+s+\rho-u)\mu(d\rho)\xi(u)du\right|\nonumber\\
\leq&e^{2\alpha\tau}C_\alpha e^{-\alpha t}\int_{-\tau}^0\int_{-\tau}^{u}|\mu|(d\rho)|\xi(u)|du\nonumber\\
\leq&\tau e^{2\alpha\tau}C_\alpha e^{-\alpha t}\|\xi\|_{C_\tau}\cdot|\mu|\bigl([-\tau,0]\bigr),
\end{align}
which implies that
\begin{equation}\label{eq4-1}
\sup_{-\tau\leq s\leq0}|r(t+s)\xi(0)|\rightarrow0\quad {\rm as}\quad t\rightarrow\infty
\end{equation}
and
\begin{equation}\label{eq5}
\sup_{-\tau\leq s\leq0}\left|\int_{-\tau}^0\int_{-\tau}^{u}r(t+s+\rho-u)\mu(d\rho)\xi(u)du\right|\rightarrow0\quad {\rm as}\quad t\rightarrow\infty.
\end{equation}
Combining (\ref{eq2}), (\ref{eq4-1}) and (\ref{eq5}), in order to prove (\ref{Conclu1}), we only need to show that
\begin{equation}\label{eq6}
\lim_{t\rightarrow\infty}\int_{-t}^\bullet r(\bullet-u)\Sigma dB(u)=\int_{-\infty}^{\bullet}r(\bullet-u)\Sigma dB(u)\quad {\rm in}\quad C_\tau.
\end{equation}
Note that the resolvent of  (\ref{LSFDE}) and (\ref{LSFDE2}), i.e. $\{r(t):t\geq0\}$ is a locally absolutely continuous function satisfying the following equation
\begin{align}\label{eq7}
r(t)&=\mathbf{I}_{n\times n}+\int_0^t\int_{[\max\{-\tau,-s\},0]}\mu(du)r(s+u)ds\nonumber\\
&=\mathbf{I}_{n\times n}+\int_0^t\int_{-s}^0\mu(du)r(s+u)ds\quad {\rm for}\quad0\leq t\leq\tau
\end{align}
and
\begin{equation}\label{eq8}
r(t)=\mathbf{I}_{n\times n}+\int_0^\tau\int_{-s}^0\mu(du)r(s+u)ds+\int_\tau^t\int_{-\tau}^0\mu(du)r(s+u)ds
\end{equation}
for all $t\geq\tau$, see \cite{AMW}. This yields that
\begin{equation}\label{eq9}
r'(t)=\int_{-\tau}^0\mu(du)r(t+u)\quad {\rm for}\quad t\geq0.
\end{equation}
Consequently, using the integration by parts formula, for any $t\geq\tau$ and $-\tau\leq s\leq0$, we can see that
\begin{align}\label{eq10}
&\int_{-t}^s r(s-u)\Sigma dB(u)-\int_{-\infty}^{s}r(s-u)\Sigma dB(u)\nonumber\\
=&\int_{-\infty}^{-t} r(s-u)\Sigma dB(u)\nonumber\\
=&r(t+s)\Sigma B(-t)-\int_{-\infty}^{-t}\frac{\rm d}{{\rm d}u}\bigl[r(s-u)\bigr]\Sigma B(u)du\nonumber\\
=&r(t+s)\Sigma B(-t)+\int_{-\infty}^{-t}\left[\int_{-\tau}^0\mu(d\rho)r(s-u+\rho)\right]\Sigma B(u)du,
\end{align}
where we use the well-known law of iterated logarithm, (\ref{Eigen3}) and (\ref{eq9}).
From (\ref{Eigen3}) and (\ref{eq10}), for any $t\geq\tau$ and $\alpha\in(0,-\alpha_0)$, it is immediate that
\begin{align}\label{eq10-1}
&\sup_{-\tau\leq s\leq0}\left|\int_{-t}^s r(s-u)\Sigma dB(u)-\int_{-\infty}^{s}r(s-u)\Sigma dB(u)\right|\nonumber\\
\leq&\sup_{-\tau\leq s\leq0}\left|r(t+s)\Sigma B(-t)\right|+\sup_{-\tau\leq s\leq0}\left|\int_{-\infty}^{-t}\left[\int_{-\tau}^0\mu(d\rho)r(s-u+\rho)\right]\Sigma B(u)du\right|\nonumber\\
\leq&e^{\alpha\tau}C_\alpha e^{-\alpha t}\|\Sigma\|\cdot|B(-t)|\nonumber\\
\quad&+e^{2\alpha\tau}C_\alpha\|\Sigma\|\cdot|\mu|([-\tau,0])\sup_{u\leq0}\left\{e^{\frac\alpha2u}|B(u)|\right\}\cdot\int_{-\infty}^{-t}e^{\frac\alpha2u}du,
\end{align}
which together with the law of iterated logarithm induces that
\[\sup_{-\tau\leq s\leq0}\left|\int_{-t}^s r(s-u)\Sigma dB(u)-\int_{-\infty}^{s}r(s-u)\Sigma dB(u)\right|\longrightarrow0\quad {\rm as}\quad t\rightarrow\infty.\]
That is, (\ref{Conclu1}) holds. Finally, by (\ref{Conclu1}) and the continuity and cocycle property for $(\theta,\varphi)$, we can deduce that for any $t\geq0$ and $\xi\in C_\tau$,
\begin{align}\label{eq10-2}
&\varphi\bigl(t,\omega,U(\omega)\bigr)\nonumber\\
=&\varphi\bigl(t,\omega,\lim_{\tilde{t}\rightarrow\infty}\varphi(\tilde{t},\theta_{-\tilde{t}}\omega,\xi)\bigr)\qquad\qquad\qquad\,{\rm by\ (\ref{Conclu1})}\nonumber\\
=&\lim_{\tilde{t}\rightarrow\infty}\varphi\bigl(t,\omega,\varphi(\tilde{t},\theta_{-\tilde{t}}\omega,\xi)\bigr)\qquad\qquad\qquad{\rm by\ continuity}\nonumber\\
=&\lim_{\tilde{t}\rightarrow\infty}\varphi(t+\tilde{t},\theta_{-\tilde{t}}\omega,\xi)\qquad\qquad\qquad\qquad{\rm by\ cocycle}\nonumber\\
=&\lim_{\tilde{t}\rightarrow\infty}\varphi(t+\tilde{t},\theta_{-(t+\tilde{t})}\circ\theta_t\omega,\xi)\nonumber\\
=&U(\theta_t\omega),\quad t\geq0,\ \omega\in\Omega,
\end{align}
which gives (\ref{Conclu3}). The proof is complete.
\quad\quad$\Box$

\noindent
{\bf Proof of Proposition \ref{prop1}.} The proof will be divided into five parts.

Firstly, for any $-\tau\leq s\leq0$, it is evident that $U(\cdot)(s):\Omega\rightarrow \mathbb{R}^{n}$ is $\mathscr{F}_-$-measurable, which together with Lemma II.2.1 in \cite{Mo} yields that $U:\Omega\rightarrow C_\tau$ is $\mathscr{F}_-$-measurable. Thanks to the integration by parts formula, the law of iterated logarithm, (\ref{Eigen3}) and (\ref{eq9}), we can write
\begin{align}\label{eq11}
&U(\omega)(s)\nonumber\\
=&\int_{-\infty}^{s}r(s-u)\Sigma dB(u)\nonumber\\
=&\Sigma B(s,\omega)-\int_{-\infty}^{s}\frac{\rm d}{{\rm d}u}\bigl[r(s-u)\bigr]\Sigma B(u,\omega)du\nonumber\\
=&\Sigma B(s,\omega)+\int_{-\infty}^{s}\left[\int_{-\tau}^0\mu(d\rho)r(s-u+\rho)\right]\Sigma B(u,\omega)du
\end{align}
for $-\tau\leq s\leq0$ and $\omega\in\Omega$, and then
\begin{align}\label{eq12}
&U(\theta_t\omega)(s)\nonumber\\
=&\Sigma B(s,\theta_t\omega)+\int_{-\infty}^{s}\left[\int_{-\tau}^0\mu(d\rho)r(s-u+\rho)\right]\Sigma B(u,\theta_t\omega)du\nonumber\\
=&\Sigma B(t+s,\omega)-\Sigma B(t,\omega)+\int_{-\infty}^{s}\left[\int_{-\tau}^0\mu(d\rho)r(s-u+\rho)\right]\Sigma B(t+u,\omega)du\nonumber\\
&-\int_{-\infty}^{s}\left[\int_{-\tau}^0\mu(d\rho)r(s-u+\rho)\right]\Sigma B(t,\omega)du
\end{align}
for all $t\geq0$. This shows that
\begin{align}\label{eq13}
&\left\|U(\theta_t\omega)\right\|_{C_\tau}\nonumber\\
=&\sup_{-\tau\leq s\leq0}\left|U(\theta_t\omega)(s)\right|\nonumber\\
\leq&\|\Sigma\|\sup_{-\tau\leq s\leq0}|B(t+s,\omega)|+\|\Sigma\|\cdot|B(t,\omega)|\nonumber\\
&+\sup_{-\tau\leq s\leq0}\int_{-\infty}^{s}\left\|\int_{-\tau}^0\mu(d\rho)r(s-u+\rho)\right\|\cdot\|\Sigma\|\cdot|B(t+u,\omega)|du\nonumber\\
&+\left(\sup_{-\tau\leq s\leq0}\int_{-\infty}^{s}\left\|\int_{-\tau}^0\mu(d\rho)r(s-u+\rho)\right\|\cdot\|\Sigma\|du\right)|B(t,\omega)|\nonumber\\
\triangleq&J_1(t,\omega)+J_2(t,\omega)+J_3(t,\omega)+J_4(t,\omega).
\end{align}
In order to prove (\ref{OU1}), it is sufficient to verify
\begin{equation}\label{eq14}
\lim_{t\rightarrow\pm\infty}\frac{J_i(t,\omega)}{|t|}=0,\quad 1\leq i\leq4.
\end{equation}
For $i=1$, by (\ref{Law}) and the fact that $-\tau\leq s\leq0$, it is obvious that (\ref{eq14}) is true. The same conclusion holds for cases $i=2,4$.
Next, from (\ref{Law}) and the continuity of $B(t,\omega)$, we can check that for $\omega\in\Omega$ and $\frac12<\epsilon<1$, there exists a constant $C_\epsilon^\omega$ such that
\begin{equation}\label{eq15}
|B(t+u,\omega)|\leq C_\epsilon^\omega+|t+u|^\epsilon\leq C_\epsilon^\omega+|t|^\epsilon+|u|^\epsilon,\ u\leq0,
\end{equation}
which together with (\ref{Eigen3}) implies that
\begin{align}\label{eq16}
&\lim_{t\rightarrow\pm\infty}\frac{J_3(t,\omega)}{|t|}\nonumber\\
\leq&\lim_{t\rightarrow\pm\infty}\frac{e^{2\alpha\tau}C_\alpha\|\Sigma\|\cdot|\mu|([-\tau,0])}{|t|}\int_{-\infty}^{0}e^{\alpha u}(C_\epsilon^\omega+|t|^\epsilon+|u|^\epsilon)du\nonumber\\
=&0.
\end{align}
This gives the limit result for $i=3$ in (\ref{eq14}). That is, (\ref{OU1}) is correct.

Secondly, fix $t_0\in\mathbb{R}$ and $\omega\in\Omega$, for every sequence $\{t_n\}_{n=1}^\infty$
such that $\lim_{n\rightarrow\infty}t_n=t_0$, it is easy to see that the set $\{t_n+s|-\tau\leq s\leq0,\ n\geq0\}$ is bounded. In addition, by (\ref{Eigen3}), we conclude that
\begin{align}\label{eq17}
&\sup_{-\tau\leq s\leq0}\int_{-\infty}^{s}\left\|\int_{-\tau}^0\mu(d\rho)r(s-u+\rho)\right\|\cdot\left|\bigl(B(t_n+u,\omega)-B(t_0+u,\omega)\bigr)\right|du\nonumber\\
\leq&e^{2\alpha\tau}C_\alpha|\mu|([-\tau,0])\int_{-\infty}^{0}e^{\alpha u}\left|\bigl(B(t_n+u,\omega)-B(t_0+u,\omega)\bigr)\right|du,
\end{align}
where $\alpha\in(0,-\alpha_0)$. Combining (\ref{eq12}), (\ref{eq17}) and the continuity of $B(t,\omega)$, for the continuity of $U(\theta_t\omega)$ in $C_\tau$ with respect to $t$, it suffices to show
\[
\lim_{n\rightarrow\infty}\int_{-\infty}^{0}e^{\alpha u}\left|\bigl(B(t_n+u,\omega)-B(t_0+u,\omega)\bigr)\right|du=0,
\]
which can be done by the law of iterated logarithm and Lebesgue's dominated convergence theorem. Therefore, due to the continuity of $U(\theta_t\omega)$ and (\ref{OU1}), the temperedness of $U$ (\ref{OU2}) is evident.

Thirdly, the proof of (iii) can be obtained directly from (\ref{Conclu3}) and the definition of $\theta$.

Fourthly, for all $-\tau\leq s\leq0$, set
\[M^s=\left\{M_t^s=\int_{-t}^{s}r(s-u)\Sigma dB(u):t\geq-s\right\},\]
it is immediate that the process $M^s$ is a martingale and then
\[\lim_{t\rightarrow\infty}\int_{-t}^{s}r(s-u)\Sigma dB(u)=\int_{-\infty}^{s}r(s-u)\Sigma dB(u)\quad {\rm in}\quad L^2(\Omega,\mathscr{F},\mathbb{P}),\]
which is based on (\ref{Eigen3}) and Doob's martingale convergence theorem, see {\rm \cite[Problem 3.20 in Chap. 1]{KS}}. Consequently, $\mathbb{E}[U(\omega)(s)]=0$,
\begin{align}\label{eq18}
\mathbb{E}|U(\omega)(s)|^2=&\int_{-\infty}^{s}\left|r(s-u)\Sigma\right|^2du\nonumber\\
=&\int_{-\infty}^{0}\left|r(-u)\Sigma\right|^2du\nonumber\\
\leq&C_\alpha^2\|\Sigma\|^2\int_{-\infty}^{0}e^{2\alpha u}du\nonumber\\
=&\frac{C_\alpha^2\|\Sigma\|^2}{2\alpha},\quad \alpha\in(0,-\alpha_0),
\end{align}
which gives (\ref{OU4}). From (\ref{eq18}) and H\"{o}lder's inequality, (\ref{OU5}) is proved.

Fifthly, note that the process $\{\int_{-\infty}^{s}r(s-u)\Sigma dB(u):-\tau\leq s\leq0\}$ is not a martingale. For $\alpha\in(0,-\alpha_0)$, using (\ref{eq11}), the Fubini theorem, Doob's martingale inequality and H\"{o}lder's inequality, it is easily seen that
\begin{align}\label{eq19}
\mathbb{E}\|U(\omega)\|_{C_\tau}\leq&\|\Sigma\|\cdot\mathbb{E}\sup_{-\tau\leq s\leq0}|B(s,\omega)|\nonumber\\
&+\mathbb{E}\left(\sup_{-\tau\leq s\leq0}\int_{-\infty}^{s}\left|\int_{-\tau}^0\mu(d\rho)r(s-u+\rho)\right|\cdot\|\Sigma\|\cdot|B(u,\omega)|du\right)\nonumber\\
\leq&\|\Sigma\|\sum_{i=1}^m\left(\mathbb{E}\sup_{-\tau\leq s\leq0}|B_i(s,\omega)|\right)\nonumber\\
&+e^{\alpha\tau}C_\alpha\|\Sigma\|\int_{-\tau}^0e^{-\alpha\rho}|\mu|(d\rho)\int_{-\infty}^0e^{\alpha u}\mathbb{E}|B(u,\omega)|du\nonumber\\
\leq&\|\Sigma\|\sum_{i=1}^m\left(\mathbb{E}\sup_{-\tau\leq s\leq0}|B_i(s,\omega)|^2\right)^\frac12\nonumber\\
&+e^{\alpha\tau}C_\alpha\|\Sigma\|\int_{-\tau}^0e^{-\alpha\rho}|\mu|(d\rho)\int_{-\infty}^0e^{\alpha u}\sum_{i=1}^m\mathbb{E}|B_i(u,\omega)|du\nonumber\\
\leq&\|\Sigma\|\sum_{i=1}^m\left(4\mathbb{E}|B_i(-\tau,\omega)|^2\right)^\frac12\nonumber\\
&+me^{\alpha\tau}C_\alpha\|\Sigma\|\int_{-\tau}^0e^{-\alpha\rho}|\mu|(d\rho)\int_{-\infty}^0e^{\alpha u}\sqrt{\frac{-2u}{\pi}}du\nonumber\\
=&2m\sqrt\tau\|\Sigma\|+\sqrt{\frac{2}{\pi\alpha^3}}me^{\alpha\tau}C_\alpha\|\Sigma\|\Gamma\left(\frac32\right)\int_{-\tau}^0e^{-\alpha\rho}|\mu|(d\rho),
\end{align}
where
\[\mathbb{E}|B_i(u,\omega)|=\int_{-\infty}^{\infty}\frac{|x|}{\sqrt{-2\pi u}}e^{\frac{x^2}{2u}}dx=2\int_{0}^{\infty}\frac{|x|}{\sqrt{-2\pi u}}e^{\frac{x^2}{2u}}dx=\sqrt{\frac{-2u}{\pi}}\]
and
\[\int_{-\infty}^0e^{\alpha u}\sqrt{\frac{-2u}{\pi}}du=\sqrt{\frac{2}{\pi\alpha^3}}\int_0^{\infty}e^{-v}v^{\frac12}dv=\sqrt{\frac{2}{\pi\alpha^3}}\Gamma\left(\frac32\right).\]
Here, $\Gamma(\cdot)$ is the Gamma function. In addition, applying the same method as in the proof of (\ref{eq19}), we can conclude that
\begin{align}\label{eq20}
&\mathbb{E}\|U(\omega)\|_{C_\tau}^2\nonumber\\
\leq&2\|\Sigma\|^2\cdot\mathbb{E}\sup_{-\tau\leq s\leq0}|B(s,\omega)|^2\nonumber\\
&+2\mathbb{E}\left(\sup_{-\tau\leq s\leq0}\int_{-\infty}^{s}\left|\int_{-\tau}^0\mu(d\rho)r(s-u+\rho)\right|\cdot\|\Sigma\|\cdot|B(u,\omega)|du\right)^2\nonumber\\
\leq&2\|\Sigma\|^2\sum_{i=1}^m\left(\mathbb{E}\sup_{-\tau\leq s\leq0}|B_i(s,\omega)|^2\right)\nonumber\\
&+2e^{2\alpha\tau}C_\alpha^2\|\Sigma\|^2\left|\int_{-\tau}^0e^{-\alpha\rho}|\mu|(d\rho)\right|^2\mathbb{E}\left(\int_{-\infty}^0e^{\alpha u}|B(u,\omega)|du\right)^2\nonumber\\
\leq&8\|\Sigma\|^2\sum_{i=1}^m\mathbb{E}|B_i(-\tau,\omega)|^2\nonumber\\
&+2e^{2\alpha\tau}C_\alpha^2\|\Sigma\|^2\left|\int_{-\tau}^0e^{-\alpha\rho}|\mu|(d\rho)\right|^2\mathbb{E}\left(\int_{-\infty}^0e^{\alpha u}du\int_{-\infty}^0e^{\alpha u}|B(u,\omega)|^2du\right)\nonumber\\
=&8m\tau\|\Sigma\|^2+2me^{2\alpha\tau}C_\alpha^2\|\Sigma\|^2\left|\int_{-\tau}^0e^{-\alpha\rho}|\mu|(d\rho)\right|^2\cdot\frac1\alpha\int_{-\infty}^0-ue^{\alpha u}du\nonumber\\
=&8m\tau\|\Sigma\|^2+\frac{2m}{\alpha^3}e^{2\alpha\tau}C_\alpha^2\|\Sigma\|^2\left|\int_{-\tau}^0e^{-\alpha\rho}|\mu|(d\rho)\right|^2,
\end{align}
which proves (\ref{OU7}). The proof is complete.
\quad\quad$\Box$

\begin{remark}
{\rm In the proof of $\rm (ii)$, the continuity of $U(\theta_t\omega)$ for $t\geq0$ can also be obtained by  (\ref{Conclu3}). Furthermore, the so-called generalized Ornstein-Uhlenbeck process $U(\theta_t\omega)$ and its statistical properties will be useful on the conjugacy of stochastic and random functional differential equations, see Lemma \ref{lem2} below.}
\end{remark}

\noindent
{\bf Proof of Theorem \ref{thm1-1}.} Thanks to Theorem \ref{thm1} and Lebesgue's dominated convergence theorem, in order to prove (\ref{Conclu1-1}), it remains to show that there exists a random variable $F_\xi\in L^2(\Omega,\mathscr{F},\mathbb{P};\mathbb{R}_+)$ such that
\begin{equation}\label{eq20-1}
\|\varphi(t,\theta_{-t}\omega,\xi)-U(\omega)\|_{C_\tau}\leq F_\xi(\omega)
\end{equation}
for all $\xi\in C_\tau$, $t\geq\tau$ and $\omega\in\Omega$. Note that for $t\geq\tau$, by (\ref{eq2}), (\ref{eq3}) and (\ref{eq4}), we have
\begin{align}\label{eq20-2}
&\|\varphi(t,\theta_{-t}\omega,\xi)\|_{C_\tau}\nonumber\\
=&\sup_{-\tau\leq s\leq0}\left|\varphi(t,\theta_{-t}\omega,\xi)(s)\right|\nonumber\\
\leq&e^{\alpha\tau}C_\alpha e^{-\alpha t}\|\xi\|_{C_\tau}+\tau e^{2\alpha\tau}C_\alpha e^{-\alpha t}\|\xi\|_{C_\tau}\cdot|\mu|\bigl([-\tau,0]\bigr)\nonumber\\
&+\sup_{-\tau\leq s\leq0}\left|\int_{-t}^s r(s-u)\Sigma dB(u)\right|\nonumber\\
\leq&C_\alpha\|\xi\|_{C_\tau}+\tau e^{\alpha\tau}C_\alpha\|\xi\|_{C_\tau}\cdot|\mu|\bigl([-\tau,0]\bigr)\nonumber\\
&+\sup_{-\tau\leq s\leq0}\left|\int_{-t}^s r(s-u)\Sigma dB(u)\right|.
\end{align}
Combining this and the fact that $\|U(\bullet)\|_{C_\tau}\in L^2(\Omega,\mathscr{F},\mathbb{P};\mathbb{R}_+)$, see (v) of Proposition \ref{prop1}, we only need to verify that there exists $\widetilde{F}\in L^2(\Omega,\mathscr{F},\mathbb{P};\mathbb{R}_+)$ such that
\begin{equation}\label{eq20-3}
\sup_{-\tau\leq s\leq0}\left|\int_{-t}^s r(s-u)\Sigma dB(u)\right|\leq \widetilde{F}(\omega)
\end{equation}
for all $t\geq\tau$ and $\omega\in\Omega$. Applying the integration by parts formula, it is easy to check that
\begin{align}
&\int_{-t}^s r(s-u)\Sigma dB(u)\nonumber\\
=&\Sigma B(s)-r(t+s)\Sigma B(-t)-\int_{-t}^{s}\frac{\rm d}{{\rm d}u}\bigl[r(s-u)\bigr]\Sigma B(u)du\nonumber\\
=&\Sigma B(s)-r(t+s)\Sigma B(-t)+\int_{-t}^{s}\left[\int_{-\tau}^0\mu(d\rho)r(s-u+\rho)\right]\Sigma B(u)du\nonumber
\end{align}
and then
\begin{align}\label{eq20-4}
&\sup_{-\tau\leq s\leq0}\left|\int_{-t}^s r(s-u)\Sigma dB(u)\right|\nonumber\\
\leq&\|\Sigma\|\sup_{-\tau\leq s\leq0}|B(s)|+e^{\alpha\tau}C_\alpha\|\Sigma\|\sup_{t\geq\tau}\left\{e^{-\alpha t}|B(-t)|\right\} \nonumber\\
&+e^{2\alpha\tau}C_\alpha\|\Sigma\|\cdot|\mu|\bigl([-\tau,0]\bigr)\int_{-\infty}^{0}e^{\alpha u}|B(u)|du\nonumber\\
\leq&\|\Sigma\|\sup_{-\tau\leq s\leq0}|B(s)|+e^{\alpha\tau}C_\alpha\|\Sigma\|\sup_{t\geq0}\left\{e^{-\frac\alpha2t}|B(-t)|\right\}\nonumber\\
&+e^{2\alpha\tau}C_\alpha\|\Sigma\|\cdot|\mu|\bigl([-\tau,0]\bigr)\sup_{t\geq0}\left\{e^{-\frac\alpha2t}|B(-t)|\right\}\int_{-\infty}^{0}e^{\frac\alpha2u}du\nonumber\\
\triangleq&\widetilde{F}_1(\omega)+\widetilde{F}_2(\omega)+\widetilde{F}_3(\omega).
\end{align}
In what follows, we will show that $\widetilde{F}_i\in L^2(\Omega,\mathscr{F},\mathbb{P};\mathbb{R}_+)$ for $i=1,2,3$, it remains to prove that
\begin{equation}\label{eq20-5}
\sup_{-\tau\leq s\leq0}|B(s)|\in L^2(\Omega,\mathscr{F},\mathbb{P};\mathbb{R}_+)
\end{equation}
and
\begin{equation}\label{eq20-6}
\sup_{t\geq0}\left\{e^{-\frac\alpha2t}|B(-t)|\right\}\in L^2(\Omega,\mathscr{F},\mathbb{P};\mathbb{R}_+).
\end{equation}
In fact, by Doob's martingale inequality, we conclude at once that
\begin{equation}\label{eq20-7}
\mathbb{E}\left(\sup_{-\tau\leq s\leq0}|B(s)|^2\right)\leq\sum_{i=1}^m\mathbb{E}\left(\sup_{-\tau\leq s\leq0}|B_i(s)|^2\right)\leq4\sum_{i=1}^m\mathbb{E}|B_i(-\tau)|^2=4m\tau
\end{equation}
and
\begin{align}\label{eq20-8}
&\mathbb{E}\left|\sup_{t\geq0}\left\{e^{-\frac\alpha2t}|B(-t)|\right\}\right|^2\nonumber\\
=&\mathbb{E}\left(\sup_{t\geq0}\left\{e^{-\alpha t}|B(-t)|^2\right\}\right)\nonumber\\
\leq&\sum_{i=1}^m\mathbb{E}\left(\sup_{n\in\mathbb{N}}\sup_{n-1\leq t\leq n}\left\{e^{-\alpha t}|B_i(-t)|^2\right\}\right)\nonumber\\
\leq&\sum_{i=1}^m\sum_{n=1}^\infty\mathbb{E}\left(\sup_{n-1\leq t\leq n}\left\{e^{-\alpha t}|B_i(-t)|^2\right\}\right)\nonumber\\
\leq&\sum_{i=1}^m\sum_{n=1}^\infty e^{-\alpha(n-1)}\mathbb{E}\left(\sup_{n-1\leq t\leq n}\left\{|B_i(-t)|^2\right\}\right)\nonumber\\
\leq&4\sum_{i=1}^m\sum_{n=1}^\infty e^{-\alpha(n-1)}\mathbb{E}|B_i(-n)|^2\nonumber\\
=&4m\sum_{n=1}^\infty ne^{-\alpha(n-1)}\nonumber\\
<&\infty,
\end{align}
which gives (\ref{eq20-5}) and (\ref{eq20-6}). Set $\widetilde{F}=\widetilde{F}_1+\widetilde{F}_2+\widetilde{F}_3$, the proof is complete.\quad\quad$\Box$

\noindent
{\bf Proof of Corollary \ref{cor1}.} Based on Theorem \ref{thm1-1} and H\"{o}lder's inequality, the proof is immediate.\quad\quad$\Box$

\section{Nonlinear system and global stability}
In this section, we will focus on the nonlinear perturbation of (\ref{SFDE}) and (\ref{SFDE2}), i.e.
\begin{equation}\label{SFDE3}
  dx(t)=[L(x_t)+f(x_t)]dt+\Sigma dB(t),\quad t\geq0,
\end{equation}
\begin{equation}\label{SFDE4}x(s)=x_0(s)=\xi(s),\quad s\in[-\tau,0]\ {\rm and}\ \xi\in C_\tau,
\end{equation}
where $f:C_\tau\rightarrow\mathbb{R}^n$ satisfies the globally Lipschitz condition
\begin{equation}\label{Lip}
\bigl|f(\xi)-f(\eta)\bigr|\leq L\|\xi-\eta\|_{C_\tau}
\end{equation}
for all $\xi,\eta\in C_\tau$, where $L>0$ is the Lipschitz constant.  Let $\psi(t,\omega,\xi)=x_t(\omega,\xi)$ denote the solution mapping of (\ref{SFDE3}) and (\ref{SFDE4}), which can generate an RDS in the state space $C_\tau$, see Theorem 2.9 in \cite{BJX} or \cite{JL3}. In what follows, we will show that the asymptotic behavior of $\psi$ will be similar as $\varphi$ generated by (\ref{SFDE}) and (\ref{SFDE2}).

\begin{theorem}\label{thm2} Assume that $\alpha_0<0$ and there exists a constant $\alpha\in(0,-\alpha_0)$ such that $Le^{\alpha\tau}C_\alpha-\alpha<0$, then there exists a unique random equilibrium $V(\omega)$ of the RDS $(\theta,\psi)$ such that
\begin{equation}\label{Conclu4}
\lim_{t\rightarrow\infty}\psi(t,\theta_{-t}\omega,\xi)=V(\omega)\quad {\rm in}\quad C_\tau
\end{equation}
for any $\xi\in C_\tau$ and $\omega\in\Omega$. Moreover, the random equilibrium $V:\Omega\rightarrow C_\tau$ is tempered.
\end{theorem}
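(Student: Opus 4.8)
The plan is to transfer the analysis from the affine case to the nonlinear one by exploiting the global Lipschitz condition (\ref{Lip}) together with the exponential decay (\ref{Eigen3}) of the resolvent, the hypothesis $Le^{\alpha\tau}C_\alpha-\alpha<0$ serving precisely to close a Gronwall estimate. First I would record the nonlinear solution through the variation-of-constants formula: by exactly the reasoning that produced (\ref{VCF}), the solution of (\ref{SFDE3})--(\ref{SFDE4}) satisfies
\[
x(t,\xi)=r(t)\xi(0)+\int_{-\tau}^0\int_{-\tau}^{u}r(t+s-u)\mu(ds)\xi(u)du+\int_0^t r(t-s)f(x_s)ds+\int_0^t r(t-s)\Sigma dB(s),
\]
so that $\psi$ differs from the affine flow $\varphi$ only through the extra drift $\int_0^t r(t-s)f(x_s)ds$.

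The central step is a deterministic contraction estimate. For two initial data $\xi,\eta\in C_\tau$ the difference $d(t)=x(t,\xi)-x(t,\eta)$ carries no stochastic term, since the Wiener convolutions cancel; using (\ref{Eigen3}) and (\ref{Lip}) one obtains
\[
|d(t)|\leq (1+c)C_\alpha e^{-\alpha t}\|\xi-\eta\|_{C_\tau}+LC_\alpha\int_0^t e^{-\alpha(t-s)}\|x_s^\xi-x_s^\eta\|_{C_\tau}ds,
\]
where $c$ collects the contribution of the initial segment. Setting $v(t)=e^{\alpha t}\sup_{-\tau\le\theta\le 0}|d(t+\theta)|$ and absorbing the delay through the factor $e^{\alpha\tau}$, Gronwall's inequality yields
\[
\|\psi(t,\omega,\xi)-\psi(t,\omega,\eta)\|_{C_\tau}\leq K e^{-\beta t}\|\xi-\eta\|_{C_\tau},\qquad \beta:=\alpha-Le^{\alpha\tau}C_\alpha>0,
\]
with $K$ deterministic. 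This is the estimate that renders the hypothesis indispensable, and it shows that the pullback limit, once it is known to exist, cannot depend on $\xi$.

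Next I would prove that $\{\psi(t,\theta_{-t}\omega,\xi)\}_{t\ge 0}$ is Cauchy in $C_\tau$. The cocycle property gives $\psi(t+h,\theta_{-(t+h)}\omega,\xi)=\psi(t,\theta_{-t}\omega,\psi(h,\theta_{-(t+h)}\omega,\xi))$, so the contraction estimate (applied over the base point $\theta_{-t}\omega$) bounds the increment by $Ke^{-\beta t}\|\psi(h,\theta_{-(t+h)}\omega,\xi)-\xi\|_{C_\tau}$. It remains to control a single trajectory: a second Gronwall argument, again starting from the variation-of-constants representation and using $|f(x_s)|\le|f(0)|+L\|x_s\|_{C_\tau}$, produces a bound $\|\psi(h,\theta_{-(t+h)}\omega,\xi)\|_{C_\tau}\le K'(\|\xi\|_{C_\tau}+R(\theta_{-(t+h)}\omega))$ in which $R$ is built from the affine Ornstein--Uhlenbeck quantities and is tempered by Proposition \ref{prop1}. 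Since $R$ is tempered while $e^{-\beta t}$ decays exponentially, the product tends to $0$ uniformly, so the limit $V(\omega):=\lim_{t\to\infty}\psi(t,\theta_{-t}\omega,\xi)$ exists and is independent of $\xi$, which is (\ref{Conclu4}).

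Finally, invariance and the remaining properties follow the pattern already used for Theorem \ref{thm1}: combining (\ref{Conclu4}), the continuity of $\psi(t,\omega,\cdot)$ and the cocycle identity exactly as in (\ref{eq10-2}) gives $\psi(t,\omega,V(\omega))=V(\theta_t\omega)$, so $V$ is a random equilibrium. Uniqueness is then automatic: if $V'$ is another equilibrium, invariance lets me write $V'(\omega)=\psi(t,\theta_{-t}\omega,V'(\theta_{-t}\omega))$, and since $V'(\theta_{-t}\omega)$ grows only temperedly, the contraction estimate forces $\|V'(\omega)-V(\omega)\|_{C_\tau}\le K e^{-\beta t}\|V'(\theta_{-t}\omega)-\xi\|_{C_\tau}\to 0$. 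Temperedness of $V$ comes from the same single-trajectory bound together with the temperedness of the underlying Ornstein--Uhlenbeck process. The main obstacle I anticipate is the Cauchy step: securing a genuinely tempered a priori bound on the forward trajectory and verifying that its product with $e^{-\beta t}$ vanishes uniformly in $h$; the delay factor $e^{\alpha\tau}$ must be tracked carefully throughout, since it is exactly what the hypothesis is tuned to absorb.
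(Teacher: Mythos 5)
Your overall architecture is the same as the paper's: a deterministic contraction estimate via Gronwall (the paper's Lemma \ref{lem1}), an a priori tempered bound on trajectories, a Cauchy argument through the cocycle property (Lemma \ref{lem3}), and uniqueness among tempered equilibria via the contraction (Lemma \ref{lem4}); your contraction, invariance and uniqueness steps are essentially correct. However, there is a genuine gap at precisely the step you flagged as the main obstacle. You bound $\|\psi(h,\theta_{-(t+h)}\omega,\xi)\|_{C_\tau}\le K'\bigl(\|\xi\|_{C_\tau}+R(\theta_{-(t+h)}\omega)\bigr)$ with $R$ tempered, and then claim that multiplying by $Ke^{-\beta t}$ gives convergence to $0$ uniformly in $h$. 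This inference fails: temperedness only yields $R(\theta_{-(t+h)}\omega)\le e^{\gamma(t+h)}M_\gamma(\omega)$ with $M_\gamma(\omega)=\sup_{s\in\mathbb{R}}\{e^{-\gamma|s|}R(\theta_s\omega)\}$, so $e^{-\beta t}R(\theta_{-(t+h)}\omega)\le e^{(\gamma-\beta)t}e^{\gamma h}M_\gamma(\omega)$, which is not small uniformly in $h\ge 0$. Indeed $\sup_{h\ge0}R(\theta_{-(t+h)}\omega)$ is typically infinite for a tempered $R$: take $R(\omega)=1+\|U(\omega)\|_{C_\tau}$, which is tempered by Proposition \ref{prop1}(ii), yet the stationary process $\|U(\theta_s\omega)\|_{C_\tau}$ is almost surely unbounded over an infinite time horizon. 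So, as written, the Cauchy property of $\{\psi(t,\theta_{-t}\omega,\xi)\}_{t\ge0}$ does not follow.

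The repair is to anchor the a priori bound at the base point rather than at the starting point of the trajectory, which is exactly what the paper's Lemma \ref{lem2} does: there exists a tempered $R_\xi$ such that $\sup_{h\ge0}\|\psi(h,\theta_{-h}\tilde\omega,\xi)\|_{C_\tau}\le R_\xi(\tilde\omega)$ for all $\tilde\omega$. Since $\psi(h,\theta_{-(t+h)}\omega,\xi)=\psi\bigl(h,\theta_{-h}(\theta_{-t}\omega),\xi\bigr)$, the increment in the Cauchy argument is then bounded by $Ke^{-\beta t}\bigl(R_\xi(\theta_{-t}\omega)+\|\xi\|_{C_\tau}\bigr)$, and temperedness of $R_\xi$ with exponent $\gamma<\beta$ (say $\gamma=\beta/2$) makes this vanish as $t\to\infty$, uniformly in $h$. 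The Gronwall computation delivers this anchoring automatically if you run it in pullback variables after subtracting the Ornstein--Uhlenbeck equilibrium: $\phi(t,\omega,\xi-U(\omega))=\psi(t,\omega,\xi)-U(\theta_t\omega)$ solves a pathwise (noise-free) delay equation, and in the estimate (\ref{eq32-1}) the inhomogeneous term becomes $e^{\alpha\tau}C_\alpha\int_{-h}^0 e^{\beta\rho}\bigl(L\|U(\theta_\rho\tilde\omega)\|_{C_\tau}+|f(0)|\bigr)d\rho\le e^{\alpha\tau}C_\alpha\int_{-\infty}^0 e^{\beta\rho}\bigl(L\|U(\theta_\rho\tilde\omega)\|_{C_\tau}+|f(0)|\bigr)d\rho$, which is finite and tempered in $\tilde\omega$, while the boundary term $Ke^{-\beta h}\|U(\theta_{-h}\tilde\omega)\|_{C_\tau}$ is bounded uniformly in $h$ again by temperedness of $U$. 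Two smaller omissions: your variation-of-constants and Gronwall estimates are only valid for $t\ge\tau$ (the segment over $[-\tau,0]$ still involves initial data when $t<\tau$), so the range $0\le t\le\tau$ needs the separate elementary argument of (\ref{eq39})--(\ref{eq41}); and the supremum defining the tempered bound must be taken over a countable dense set (using continuity of $t\mapsto U(\theta_t\omega)$) to preserve measurability, as in (\ref{eq41}).
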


\begin{remark}
{\rm Compared to the results in \cite{JL1,JL3}, we remove some assumptions of boundedness and monotonicity. Moreover, the proofs given in the next section are greatly simplified.
}
\end{remark}

\begin{remark}
{\rm The proof of Theorem \ref{thm2} presented in Lemma \ref{lem1}-Lemma \ref{lem4} is standard and rigorous, which can also be applied to investigate the existence and global stability of random equilibria for the following nonlinear SFDEs
\begin{equation}\label{SFDE3-2}
  dx(t)=g(x_t)dt+\Sigma dB(t),\quad t\geq0.
\end{equation}
Here, $g:C_\tau\rightarrow\mathbb{R}^n$ satisfies the locally Lipschitz condition and the strong dissipative condition
\[\bigl(\xi(0)-\eta(0),g(\xi)-g(\eta)\bigr)\leq-\lambda_1|\xi(0)-\eta(0)|^2+\lambda_2\int_{-\tau}^0|\xi(s)-\eta(s)|^2\mu(ds),\]
where $\xi,\eta\in C_\tau$, $\lambda_1>\lambda_2>0$ and $\mu$ is a probability measure on $[-\tau,0]$.
}
\end{remark}

\begin{corollary}\label{cor2-1} Under the same conditions given in Theorem \ref{thm2}, then for all different initial values $\xi,\eta\in C_\tau$,
\begin{equation}\label{Conclu4-1}
\lim_{t\rightarrow\infty}\|\psi(t,\omega,\xi)-\psi(t,\omega,\eta)\|_{C_\tau}=0
\end{equation}
and
\begin{equation}\label{Conclu4-2}
\lim_{t\rightarrow\infty}|x(t,\omega,\xi)-x(t,\omega,\eta)|=0
\end{equation}
in probability. Here, $x(t,\omega,\xi)=\psi(t,\omega,\xi)[0]$ is the solution of (\ref{SFDE3}) and (\ref{SFDE4}).
\end{corollary}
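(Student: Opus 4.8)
The plan is to deduce the forward-in-probability convergence claimed in the corollary from the pathwise pullback convergence supplied by Theorem \ref{thm2}, exploiting the measure-preserving property $\theta_t\mathbb{P}=\mathbb{P}$ of the underlying MDS. The key observation is that although the forward trajectories $\psi(t,\omega,\xi)$ do not converge pathwise (they keep evolving in time), for each fixed $t$ the random variable $\omega\mapsto\psi(t,\omega,\xi)$ has the same distribution as $\omega\mapsto\psi(t,\theta_{-t}\omega,\xi)$, and the latter does converge pathwise by Theorem \ref{thm2}.

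First I would set $\Phi_t(\omega):=\|\psi(t,\omega,\xi)-\psi(t,\omega,\eta)\|_{C_\tau}$ and note that, since $\theta_{-t}$ preserves $\mathbb{P}$, for every $\epsilon>0$ one has the identity $\mathbb{P}(\Phi_t>\epsilon)=\mathbb{P}(\Phi_t\circ\theta_{-t}>\epsilon)$, where $\Phi_t\circ\theta_{-t}(\omega)=\|\psi(t,\theta_{-t}\omega,\xi)-\psi(t,\theta_{-t}\omega,\eta)\|_{C_\tau}$. This transfers the question from the forward flow to the pullback flow without changing any probabilities.

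Next, applying Theorem \ref{thm2} to both initial data gives, for every $\omega\in\Omega$, that $\psi(t,\theta_{-t}\omega,\xi)\to V(\omega)$ and $\psi(t,\theta_{-t}\omega,\eta)\to V(\omega)$ in $C_\tau$, where the uniqueness of the random equilibrium $V$ guarantees the same limit for both. Hence $\Phi_t\circ\theta_{-t}(\omega)\to 0$ for every $\omega$, i.e. $\Phi_t\circ\theta_{-t}\to 0$ almost surely, which forces $\mathbb{P}(\Phi_t\circ\theta_{-t}>\epsilon)\to 0$. Combining with the first step yields $\mathbb{P}(\Phi_t>\epsilon)\to 0$, which is exactly (\ref{Conclu4-1}). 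For (\ref{Conclu4-2}) I would simply use the pointwise-evaluation bound $|x(t,\omega,\xi)-x(t,\omega,\eta)|=|\psi(t,\omega,\xi)(0)-\psi(t,\omega,\eta)(0)|\leq\Phi_t(\omega)$, so that $\{|x(t,\cdot,\xi)-x(t,\cdot,\eta)|>\epsilon\}\subseteq\{\Phi_t>\epsilon\}$ and convergence in probability is inherited from (\ref{Conclu4-1}).

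The only genuinely delicate point is the first step: one must resist the temptation to claim pathwise forward convergence, and instead recognize that the stationarity of the noise, encoded in $\theta_t\mathbb{P}=\mathbb{P}$, is precisely what converts the pathwise pullback statement of Theorem \ref{thm2} into a convergence-in-probability statement for the forward flow. Everything else reduces to the elementary facts that almost-sure convergence implies convergence in probability and that the supremum norm on $C_\tau$ dominates evaluation at $s=0$.
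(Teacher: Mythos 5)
Your proposal is correct and follows essentially the same route as the paper's own (very terse) proof: both deduce from Theorem \ref{thm2} that the pullback difference $\|\psi(t,\theta_{-t}\omega,\xi)-\psi(t,\theta_{-t}\omega,\eta)\|_{C_\tau}$ tends to $0$ for every $\omega$, and then invoke the measure-preservation $\theta_t\mathbb{P}=\mathbb{P}$ to convert this into convergence in probability for the forward difference, with (\ref{Conclu4-2}) following since the $C_\tau$-norm dominates evaluation at $s=0$. Your write-up merely makes explicit the distributional identity $\mathbb{P}(\Phi_t>\epsilon)=\mathbb{P}(\Phi_t\circ\theta_{-t}>\epsilon)$ that the paper leaves implicit.
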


\begin{theorem}\label{thm3} Under the same conditions given in Theorem \ref{thm2}, then
\begin{equation}\label{Conclu5}
\lim_{t\rightarrow\infty}\mathbb{E}\|\psi(t,\theta_{-t}\omega,\xi)-V(\omega)\|^2_{C_\tau}=0,
\end{equation}
where $V(\omega)$ is given in (\ref{Conclu4}).
\end{theorem}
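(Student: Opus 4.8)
The plan is to follow exactly the template used for Theorem \ref{thm1-1}: since Theorem \ref{thm2} already supplies the pathwise convergence $\psi(t,\theta_{-t}\omega,\xi)\to V(\omega)$ in $C_\tau$ for every $\omega\in\Omega$, it suffices to exhibit a single dominating random variable $G_\xi\in L^2(\Omega,\mathscr{F},\mathbb{P};\mathbb{R}_+)$ with
\[
\|\psi(t,\theta_{-t}\omega,\xi)-V(\omega)\|_{C_\tau}\le G_\xi(\omega)\quad\text{for all }t\ge\tau\text{ and }\omega\in\Omega,
\]
and then invoke Lebesgue's dominated convergence theorem to pass the limit inside $\mathbb{E}\|\cdot\|_{C_\tau}^2$. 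By the triangle inequality it is enough to bound $\|\psi(t,\theta_{-t}\omega,\xi)\|_{C_\tau}$ uniformly in $t\ge\tau$ by an $L^2$ random variable and, separately, to verify $\|V\|_{C_\tau}\in L^2$.

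For the uniform bound on the pullback trajectory I would conjugate (\ref{SFDE3}) by the generalized Ornstein-Uhlenbeck process via $z(t)=x(t)-U(\theta_t\omega)(0)$, which converts the equation into the pathwise random FDE $\dot z(t)=L(z_t)+f\bigl(z_t+U(\theta_t\omega)\bigr)$. Under pullback the stochastic part is absorbed, and the equilibrium identity $U(\theta_s\omega)(0)=U(\omega)(s)$ from (\ref{OU3}) makes the additive $U$-terms cancel in the difference, giving $\|\psi(t,\theta_{-t}\omega,\xi)-V(\omega)\|_{C_\tau}=\|\zeta_t(\omega)-W(\omega)\|_{C_\tau}$, where $\zeta_t$ is the pullback segment of $z$ and $W:=V-U$ is the corresponding equilibrium of the conjugated random FDE. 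Applying the resolvent estimate (\ref{Eigen3}) and the Lipschitz bound $|f(\phi)|\le|f(0)|+L\|\phi\|_{C_\tau}$ to the convolution term yields a weighted integral inequality for $P(v):=\|\zeta_v\|_{C_\tau}$ along the trajectory, and a Gronwall argument then produces a bound uniform in $t$ precisely because the hypothesis $Le^{\alpha\tau}C_\alpha-\alpha<0$ forces the relevant weighted kernel to contract. The randomness enters only through $t$-uniform quantities: the law-of-iterated-logarithm bounds $\sup_{-\tau\le s\le0}|B(s)|$ and $\sup_{t\ge0}\{e^{-\alpha t/2}|B(-t)|\}$ already shown to lie in $L^2$ in (\ref{eq20-7})--(\ref{eq20-8}), together with $\int_{-\infty}^0 e^{\alpha u}\|U(\theta_u\omega)\|_{C_\tau}\,du$, which is in $L^2$ by stationarity of $U$ and Proposition \ref{prop1}(v) via Minkowski's integral inequality. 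The resulting $G_\xi$ is therefore a genuine $t$-independent $L^2$ random variable, and finiteness of $\mathbb{E}\|V\|_{C_\tau}^2$ follows either from the decomposition $V=W+U$ with $U\in L^2$ (Proposition \ref{prop1}(v)) or, more cheaply, from Fatou's lemma applied to the a.s. convergence once the uniform $L^2$ bound on $\psi(t,\theta_{-t}\omega,\xi)$ is in hand.

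I expect the main obstacle to be the Gronwall step delivering the dominating bound \emph{uniform} in $t$. Unlike in Theorem \ref{thm1-1}, the nonlinear convolution $\int_{-t}^{s}r(s-u)f(\cdot)\,du$ is taken over the growing interval $[-t,0]$ and couples the unknown trajectory to itself, so the weighted estimate must be arranged so that the contraction factor coming from $Le^{\alpha\tau}C_\alpha-\alpha<0$ beats both the polynomial (law-of-iterated-logarithm) growth of the Brownian path and the subexponential (tempered) growth of $U(\theta_u\omega)$ as $u\to-\infty$; checking that all these contributions assemble into one $t$-independent $L^2$ function, rather than merely an $L^2$-bounded family, is the delicate point. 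Most of the underlying estimates should, however, already be available from Lemma \ref{lem1}--Lemma \ref{lem4}, so the proof will largely amount to assembling them and citing dominated convergence.
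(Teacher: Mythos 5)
Your proposal is correct and follows essentially the same route as the paper: pathwise convergence from Theorem \ref{thm2} plus dominated convergence as in Theorem \ref{thm1-1}, with the dominating $L^2$ random variable coming from the conjugation $\psi(t,\omega,\xi)-U(\theta_t\omega)$ and a Gronwall estimate --- exactly the bound (\ref{eq33})--(\ref{eq38}) already established in Lemma \ref{lem2} --- whose square-integrability the paper obtains in Lemma \ref{lem5} by unwinding $U(\theta_{-t}\omega)$ via the Brownian representation and invoking the moment bounds (\ref{eq20-7})--(\ref{eq20-8}) that you also cite. The only minor deviation is your treatment of the forcing term $\int_{-\infty}^0e^{\delta\rho}\|U(\theta_\rho\omega)\|_{C_\tau}\,d\rho$ by Minkowski's integral inequality and the measure-preserving property of $\theta$ (where the paper bounds it by $\sup_{t\geq0}\bigl\{e^{-\frac\delta2 t}\|U(\theta_{-t}\omega)\|_{C_\tau}\bigr\}$ and applies Lemma \ref{lem5}); this works, but the initial-data term $e^{-\delta t}\|U(\theta_{-t}\omega)\|_{C_\tau}$ coming from the pullback initial value $\xi-U(\theta_{-t}\omega)$ still forces you through the Lemma \ref{lem5} argument anyway, so the two proofs are in substance identical.
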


\begin{corollary}\label{cor2} Under the same conditions given in Theorem \ref{thm2}, then
\begin{equation}\label{Conclu6}
\lim_{t\rightarrow\infty}\mathbb{E}\|\psi(t,\theta_{-t}\omega,\xi)-V(\omega)\|_{C_\tau}=0,
\end{equation}
where $V(\omega)$ is given in (\ref{Conclu4}).
\end{corollary}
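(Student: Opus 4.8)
The plan is to deduce the $L^1$-type convergence in (\ref{Conclu6}) directly from the $L^2$-type convergence already established in Theorem \ref{thm3}, exactly paralleling the way Corollary \ref{cor1} was obtained from Theorem \ref{thm1-1}. Since Theorem \ref{thm3} is available as a hypothesis, no new estimate on the dynamics of $\psi$ or on the equilibrium $V$ is needed; the argument is purely a moment comparison.

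First I would apply H\"{o}lder's inequality (equivalently, the Cauchy--Schwarz inequality, or Jensen's inequality for the concave map $x\mapsto\sqrt{x}$) to control the first moment by the square root of the second moment. For every fixed $t\geq0$ and $\xi\in C_\tau$ this gives
\[
\mathbb{E}\|\psi(t,\theta_{-t}\omega,\xi)-V(\omega)\|_{C_\tau}
\leq\left(\mathbb{E}\|\psi(t,\theta_{-t}\omega,\xi)-V(\omega)\|_{C_\tau}^2\right)^{\frac12}.
\]
Passing to the limit $t\rightarrow\infty$ on the right-hand side and invoking Theorem \ref{thm3}, which asserts that the squared $C_\tau$-distance tends to zero, the left-hand side is forced to zero as well, and (\ref{Conclu6}) follows at once.

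I do not expect any genuine obstacle in this corollary. The only point worth recording is that, for each $t$, the nonnegative random variable $\|\psi(t,\theta_{-t}\omega,\xi)-V(\omega)\|_{C_\tau}$ must be square-integrable for the H\"{o}lder bound to be finite and meaningful; this is guaranteed implicitly by the validity of Theorem \ref{thm3}, since the expectation appearing there is finite (and in fact tends to zero). Hence the proof reduces to the single inequality above together with a limit, and the conclusion is immediate.
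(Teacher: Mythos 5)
Your proposal is correct and coincides with the paper's own argument: the paper's proof of this corollary is precisely ``combining Theorem \ref{thm3} and H\"{o}lder's inequality,'' which is the moment-comparison bound $\mathbb{E}\|\cdot\|_{C_\tau}\leq\bigl(\mathbb{E}\|\cdot\|_{C_\tau}^2\bigr)^{\frac12}$ followed by the limit $t\rightarrow\infty$. Your additional remark on square-integrability being implicit in Theorem \ref{thm3} is a fine observation but adds nothing beyond the paper's route.
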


\section{Proofs of main results in Section 4}
\noindent
\begin{lemma}\label{lem1}
Suppose that $\alpha_0<0$ and (\ref{Lip}) holds, then for any $\xi,\eta\in C_\tau$, we get hat
\begin{equation}\label{Conclu7}
\|\psi(t,\omega,\xi)-\psi(t,\omega,\eta)\|_{C_\tau}\leq K\|\xi-\eta\|_{C_\tau}e^{(Le^{\alpha\tau}C_\alpha-\alpha)t},\ t\geq\tau,\ \omega\in\Omega,
\end{equation}
where $K=e^{\alpha\tau}C_\alpha+\tau e^{2\alpha\tau}C_\alpha\cdot|\mu|\bigl([-\tau,0]\bigr)$.
\end{lemma}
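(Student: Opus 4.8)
The plan is to exploit the fact that the additive forcing $\Sigma\,dB(t)$ is shared by both solutions, so it cancels in their difference and the estimate reduces to a pathwise, purely deterministic one. Write $x(t)=\psi(t,\omega,\xi)(0)$ and $\tilde x(t)=\psi(t,\omega,\eta)(0)$ for the two solutions of (\ref{SFDE3})--(\ref{SFDE4}), and set $z(t):=x(t)-\tilde x(t)$, so that $z_t=\psi(t,\omega,\xi)-\psi(t,\omega,\eta)$ and $\|z_t\|_{C_\tau}$ is exactly the quantity to be bounded. Subtracting the two equations, the stochastic integrals cancel and $z$ solves $dz(t)=[L(z_t)+g(t)]\,dt$ with $z_0=\xi-\eta$, where $g(t):=f(x_t)-f(\tilde x_t)$ acts as a deterministic, path-dependent forcing term. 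Applying the variation-of-constants formula (\ref{VCF}) to this equation, with $g$ in place of $\Sigma\,dB$, I would represent the segment, whenever $t+s\geq0$, as
\begin{align*}
z(t+s)=&\,r(t+s)(\xi-\eta)(0)+\int_{-\tau}^0\int_{-\tau}^{u}r(t+s+\rho-u)\mu(d\rho)(\xi-\eta)(u)\,du\\
&+\int_0^{t+s}r(t+s-\rho)g(\rho)\,d\rho.
\end{align*}

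For the two linear terms I would reuse the estimates (\ref{eq3}) and (\ref{eq4}) verbatim: their supremum over $s$ is bounded by $\bigl(e^{\alpha\tau}C_\alpha+\tau e^{2\alpha\tau}C_\alpha|\mu|([-\tau,0])\bigr)e^{-\alpha t}\|\xi-\eta\|_{C_\tau}=Ke^{-\alpha t}\|\xi-\eta\|_{C_\tau}$, which is precisely where the constant $K$ originates. For the convolution term I would combine the resolvent decay (\ref{Eigen3}) with the Lipschitz bound (\ref{Lip}), namely $|g(\rho)|\leq L\|z_\rho\|_{C_\tau}$, and use $e^{-\alpha(t+s)}\leq e^{\alpha\tau}e^{-\alpha t}$ together with positivity of the integrand (to enlarge $\int_0^{t+s}$ to $\int_0^{t}$) to get
\[
\sup_{-\tau\leq s\leq0}\left|\int_0^{t+s}r(t+s-\rho)g(\rho)\,d\rho\right|\leq C_\alpha Le^{\alpha\tau}e^{-\alpha t}\int_0^{t}e^{\alpha\rho}\|z_\rho\|_{C_\tau}\,d\rho.
\]
Adding the two contributions and multiplying by $e^{\alpha t}$ gives, for $t\geq\tau$, the integral inequality
\[
e^{\alpha t}\|z_t\|_{C_\tau}\leq K\|\xi-\eta\|_{C_\tau}+C_\alpha Le^{\alpha\tau}\int_0^t e^{\alpha\rho}\|z_\rho\|_{C_\tau}\,d\rho.
\]

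Finally I would apply Gronwall's lemma to $\Phi(t):=e^{\alpha t}\|z_t\|_{C_\tau}$, obtaining $\Phi(t)\leq K\|\xi-\eta\|_{C_\tau}e^{C_\alpha Le^{\alpha\tau}t}$, and dividing by $e^{\alpha t}$ yields the claimed bound with exponent $Le^{\alpha\tau}C_\alpha-\alpha$. The one point requiring care — and the main obstacle — is the delay inside the Gronwall integral, in particular whether the integral inequality is valid on the whole of $[0,\tau)$, where the segment $z_t$ partly reaches into the initial interval and $z=\xi-\eta$ contributes an unconvolved term bounded only by $\|\xi-\eta\|_{C_\tau}$. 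This term is nonetheless absorbed into $Ke^{-\alpha t}\|\xi-\eta\|_{C_\tau}$: since $r(0)=\mathbf{I}_{n\times n}$, the bound (\ref{Eigen2}) at $t=0$ forces $C_\alpha\geq\|\mathbf{I}_{n\times n}\|\geq1$, so $Ke^{-\alpha t}\geq e^{\alpha\tau}C_\alpha e^{-\alpha\tau}=C_\alpha\geq1$ for $t\in[0,\tau)$. Hence the integral inequality holds for every $t\geq0$ and Gronwall applies cleanly; this bookkeeping around the memory is the only genuine difficulty, the remaining estimates being direct adaptations of those already established in Section 3.
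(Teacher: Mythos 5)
Your proof is correct and follows essentially the same route as the paper: the additive noise cancels in the difference of the two solutions, the variation-of-constants representation combined with the estimates (\ref{eq3})--(\ref{eq4}) and the Lipschitz condition (\ref{Lip}) yields the integral inequality $e^{\alpha t}\|z_t\|_{C_\tau}\leq K\|\xi-\eta\|_{C_\tau}+Le^{\alpha\tau}C_\alpha\int_0^t e^{\alpha\rho}\|z_\rho\|_{C_\tau}\,d\rho$, and Gronwall's lemma finishes. Your careful handling of the interval $[0,\tau)$ --- noting that $C_\alpha\geq1$ (since $r(0)=\mathbf{I}_{n\times n}$) forces $Ke^{-\alpha t}\geq1$ there, so the integral inequality is valid for all $t\geq0$ --- actually makes explicit a point that the paper's proof, which states (\ref{eq23}) only for $t\geq\tau$ before invoking Gronwall, leaves implicit.
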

\begin{proof}
By (\ref{SFDE3}), $-\tau\leq s\leq0$ and the variation-of-constants formula, see Lemma 6.1 in \cite{RRG}, for $t\geq\tau$, it is simple matter to check that
\begin{align}\label{eq21}
&\psi(t,\omega,\xi)(s)\nonumber\\
=&r(t+s)\xi(0)+\int_{-\tau}^0\int_{-\tau}^{u}r(t+s+\rho-u)\mu(d\rho)\xi(u)du\nonumber\\
&+\int_0^{t+s}r(t+s-\rho)f\bigl(\psi(\rho,\omega,\xi)\bigr)d\rho+\int_0^{t+s}r(t+s-\rho)\Sigma dB(\rho).
\end{align}
So, choose two different initial values $\xi,\eta\in C_\tau$, we see at once that
\begin{align}\label{eq22}
&|\psi(t,\omega,\xi)(s)-\psi(t,\omega,\eta)(s)|\nonumber\\
\leq&\|r(t+s)\|\cdot|\xi(0)-\eta(0)|+\int_{-\tau}^0\left\|\int_{-\tau}^{u}r(t+s+\rho-u)\mu(d\rho)\right\|\cdot|\xi(u)-\eta(u)|du\nonumber\\
&+\int_0^{t+s}\|r(t+s-\rho)\|\cdot|f\bigl(\psi(\rho,\omega,\xi)\bigr)-f\bigl(\psi(\rho,\omega,\eta)\bigr)|d\rho\nonumber\\
\leq&e^{\alpha\tau}C_\alpha e^{-\alpha t}\|\xi-\eta\|_{C_\tau}+\tau e^{2\alpha\tau}C_\alpha\cdot|\mu|\bigl([-\tau,0]\bigr)e^{-\alpha t}\|\xi-\eta\|_{C_\tau}\nonumber\\
&+Le^{\alpha\tau}C_\alpha e^{-\alpha t}\int_0^{t}e^{\alpha\rho}\|\psi(\rho,\omega,\xi)-\psi(\rho,\omega,\eta)\|_{C_\tau}d\rho,
\end{align}
which implies that
\begin{align}\label{eq23}
&e^{\alpha t}\|\psi(t,\omega,\xi)-\psi(t,\omega,\eta)\|_{C_\tau}\nonumber\\
\leq&K\|\xi-\eta\|_{C_\tau}+Le^{\alpha\tau}C_\alpha\int_0^{t}e^{\alpha\rho}\|\psi(\rho,\omega,\xi)-\psi(\rho,\omega,\eta)\|_{C_\tau}d\rho.
\end{align}
Here, we write $K=e^{\alpha\tau}C_\alpha+\tau e^{2\alpha\tau}C_\alpha\cdot|\mu|\bigl([-\tau,0]\bigr)$. Combining (\ref{eq23}) and the Gronwall inequality, it is obvious that
\begin{equation}\label{eq24}
\|\psi(t,\omega,\xi)-\psi(t,\omega,\eta)\|_{C_\tau}\leq K\|\xi-\eta\|_{C_\tau}e^{(Le^{\alpha\tau}C_\alpha-\alpha)t}
\end{equation}
for all $t\geq\tau$ and $\omega\in\Omega$. The proof is complete.
\end{proof}

\begin{remark}
{\rm If $Le^{\alpha\tau}C_\alpha-\alpha<0$, Lemma \ref{lem1} shows that the phenomenon of synchronization will occur, see \cite{CK,FGS1,FGS2}. In the following lemmas, we will give a rigorous proof of the existence and uniqueness of globally stable random equilibria for the RDS $(\theta,\psi)$. }
\end{remark}

\begin{lemma}\label{lem2}
Suppose that $\alpha_0<0$ and there exists a constant $\alpha\in(0,-\alpha_0)$ such that $Le^{\alpha\tau}C_\alpha-\alpha<0$, then for any $\xi\in C_\tau$,
\begin{equation}\label{Conclu8}
\sup_{t\geq0}\|\psi(t,\theta_{-t}\omega,\xi)\|_{C_\tau}\leq R_\xi(\omega)
\end{equation}
for all $\omega\in\Omega$, where $R_\xi(\omega)$ is a tempered random variable. Here, the random variable $R_\xi(\omega)$ depends on $\xi$ and $L$ is the Lipschitz constant given in (\ref{Lip}).
\end{lemma}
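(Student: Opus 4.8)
The plan is to run a pathwise Gronwall argument on the variation-of-constants representation of the pullback trajectory, using the dissipativity hypothesis $Le^{\alpha\tau}C_\alpha-\alpha<0$ to manufacture an exponentially decaying kernel that tames the stochastic forcing. First I would write down, exactly as in (\ref{eq21}) but after the Wiener shift $\theta_{-t}$ (compare the passage from (\ref{eq1}) to (\ref{eq2})), the identity
\begin{align*}
\psi(t,\theta_{-t}\omega,\xi)(s)
=&\,r(t+s)\xi(0)+\int_{-\tau}^0\int_{-\tau}^{u}r(t+s+\rho-u)\mu(d\rho)\xi(u)du\\
&+\int_0^{t+s}r(t+s-\rho)f\bigl(\psi(\rho,\theta_{-t}\omega,\xi)\bigr)d\rho+\int_{-t}^{s}r(s-\rho)\Sigma dB(\rho)
\end{align*}
for $t\geq\tau$ and $-\tau\leq s\leq0$. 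By (\ref{eq3}) and (\ref{eq4}) the two deterministic terms are bounded by $K\|\xi\|_{C_\tau}e^{-\alpha t}$, while the additive-noise term is \emph{precisely} the one estimated in the proof of Theorem \ref{thm1-1}: by (\ref{eq20-4}) it is dominated, uniformly in $t\geq\tau$, by the tempered random variable $\widetilde{F}(\omega)=\widetilde{F}_1+\widetilde{F}_2+\widetilde{F}_3$ constructed there. Hence only the nonlinear term requires genuine work.

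Next I would set up the Gronwall estimate while avoiding the changing-environment pitfall. Fix the terminal time $T\geq\tau$ and regard $\omega':=\theta_{-T}\omega$ as a \emph{frozen} environment, so that $\rho\mapsto\psi(\rho,\omega',\xi)$ is an ordinary forward trajectory on $[0,T]$ with $\psi(T,\omega',\xi)=\psi(T,\theta_{-T}\omega,\xi)$. Writing $g(\rho):=\|\psi(\rho,\omega',\xi)\|_{C_\tau}$ and using the bound $|f(\zeta)|\leq L\|\zeta\|_{C_\tau}+|f(0)|$ (which follows from (\ref{Lip}) with $\eta=0$) together with (\ref{Eigen3}), the representation above yields
\[
g(\rho)\leq a(\rho)+Le^{\alpha\tau}C_\alpha\int_0^{\rho}e^{-\alpha(\rho-\sigma)}g(\sigma)d\sigma,\quad a(\sigma):=Ke^{-\alpha\sigma}\|\xi\|_{C_\tau}+\tfrac{e^{\alpha\tau}C_\alpha|f(0)|}{\alpha}+N(\sigma,\omega'),
\]
where $N(\sigma,\omega'):=\sup_{-\tau\leq s\leq0}\bigl|\int_0^{\sigma+s}r(\sigma+s-u)\Sigma dB(u,\omega')\bigr|$. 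Applying Gronwall's inequality to $e^{\alpha\rho}g(\rho)$ and evaluating at $\rho=T$ produces the crucial decaying kernel
\[
\|\psi(T,\theta_{-T}\omega,\xi)\|_{C_\tau}\leq a(T)+Le^{\alpha\tau}C_\alpha\int_0^{T}a(\sigma)\,e^{(Le^{\alpha\tau}C_\alpha-\alpha)(T-\sigma)}d\sigma .
\]

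The hard part is to show that the right-hand side is bounded \emph{uniformly} in $T$. The constant and $\|\xi\|_{C_\tau}$ contributions in $a$ are harmless because $Le^{\alpha\tau}C_\alpha-\alpha<0$ makes $\int_0^{\infty}e^{(Le^{\alpha\tau}C_\alpha-\alpha)y}dy$ finite. The genuine obstacle is the stochastic convolution $N(\sigma,\theta_{-T}\omega)$ at intermediate times: after the shift it equals $\sup_{s}|\int_{-T}^{\sigma-T+s}r(\sigma-T+s-u)\Sigma dB(u,\omega)|$, whose upper endpoint may be as negative as $-T$, so it is \emph{not} itself bounded uniformly in $T$. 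I would control it by integration by parts (using $r'(x)=\int_{-\tau}^0\mu(d\rho)r(x+\rho)$ from (\ref{eq9})), exactly as in (\ref{eq10}), and estimate the resulting boundary and integral terms with the sub-linear law-of-iterated-logarithm bound $|B(u,\omega)|\leq C_\epsilon^\omega+|u|^\epsilon$ of (\ref{eq15}) rather than an exponential one. This reveals that $N(\sigma,\theta_{-T}\omega)$ grows at most \emph{polynomially} in $T-\sigma$ (the $T^\epsilon$ pieces coming from $B(-T)$ carry an extra $e^{-\alpha\sigma}$ weight from $r(\sigma+s)$, so their $T$-dependence is again dominated by the exponential factor). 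Since a polynomial times the strictly decaying kernel $e^{(Le^{\alpha\tau}C_\alpha-\alpha)(T-\sigma)}$ is integrable with a $T$-independent integral, the whole expression is bounded by a finite $R_\xi(\omega)$ for all $T\geq\tau$, the range $0\leq T\leq\tau$ being handled directly since there the segment still carries the initial data $\xi$. The essential mechanism is thus the competition between the sub-exponential growth of the past noise and the exponential contraction guaranteed by $Le^{\alpha\tau}C_\alpha-\alpha<0$.

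Finally, temperedness of $R_\xi$ is inherited from its building blocks. The bound $R_\xi(\omega)$ is an explicit finite combination of $\|\xi\|_{C_\tau}$, $|f(0)|$, the tempered variable $\widetilde{F}(\omega)$, and convergent integrals of the law-of-iterated-logarithm constant $C_\epsilon^\omega$; since each $\omega$-dependent factor is tempered with respect to $\theta$ (this being the content of the law of iterated logarithm encoded in (\ref{Law}) and already exploited for $U$ in Proposition \ref{prop1}), the combination $R_\xi$ satisfies $\sup_{t\in\mathbb{R}}\{e^{-\gamma|t|}R_\xi(\theta_t\omega)\}<\infty$ for every $\gamma>0$, which is Definition \ref{Tem}. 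This establishes (\ref{Conclu8}) and completes the proof.
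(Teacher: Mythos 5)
Your proposal is correct in its essential strategy, but it is genuinely different from the paper's proof. The paper never confronts the stochastic convolution at intermediate pulled-back times at all: using (\ref{Conclu3}) and (\ref{OU3-2}), it subtracts the generalized Ornstein--Uhlenbeck process and works with $\phi(t,\omega,\xi-U(\omega))=\psi(t,\omega,\xi)-U(\theta_t\omega)$, which solves a \emph{pathwise} random functional differential equation (\ref{eq27}); variation of constants plus Gronwall then give (\ref{eq32}), in which the only random ingredient is $\|U(\theta_\rho\omega)\|_{C_\tau}$, so after the substitution $\omega\mapsto\theta_{-t}\omega$ the temperedness of the resulting bound $\widetilde{R}_\xi$ in (\ref{eq33}) is inherited directly from the temperedness of $U$ proved in Proposition \ref{prop1}(ii), via the elementary manipulations (\ref{eq36})--(\ref{eq37}). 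You instead keep the noise inside the Gronwall estimate, which forces you to estimate $N(\sigma,\theta_{-T}\omega)$, the convolution over windows ending as deep as $\sigma-T$ in the past; your mechanism for this --- integration by parts via (\ref{eq9}), the sublinear bound (\ref{eq15}), and absorption of the resulting polynomial growth in $T-\sigma$ by the strictly decaying kernel $e^{(Le^{\alpha\tau}C_\alpha-\alpha)(T-\sigma)}$ --- is sound, and your ``frozen environment'' framing correctly avoids the pitfall of treating the pullback family as a single trajectory. What each route buys: yours is more self-contained (it uses only the resolvent estimate and the law of iterated logarithm, not the equilibrium property of $U$), while the paper's conjugation trick is shorter and yields a bound whose explicit form matters later --- Theorem \ref{thm3} proves $\widetilde{R}_\xi\in L^2$ by combining (\ref{eq33}) with Lemma \ref{lem5}, which would be harder with your bound since it involves the law-of-iterated-logarithm constants $C_\epsilon^\omega$, whose moments are not as accessible as those of $U$.

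Two loose ends in your write-up deserve more than the single clauses you give them. First, on $0\leq T\leq\tau$ the segment does \emph{not} consist only of initial data: it contains the solution values on $[0,T]$ driven by the shifted noise $\theta_{-T}\omega$, so a separate (easy, but nontrivial) bound is needed; the paper spends (\ref{eq39})--(\ref{eq42}) on exactly this, including the measurability of the supremum via continuity and a rational-time reduction. Second, the temperedness of your final bound rests on the claim that $\omega\mapsto C_\epsilon^\omega$ and $\omega\mapsto\widetilde{F}(\omega)$ are tempered; this is true, but it requires a short argument (e.g.\ $C_\epsilon^{\theta_t\omega}\leq 2C_\epsilon^\omega+2|t|^\epsilon$ by subadditivity of $x\mapsto x^\epsilon$, so these variables grow at most polynomially along orbits), which you assert rather than prove. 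Both gaps are routine to fill and do not affect the validity of your overall scheme.
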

\begin{proof}
Let us first observe that for any $\xi\in C_\tau$, there exists a tempered random variable $R_\xi^1(\omega)$ such that
\begin{equation}\label{eq25}
\sup_{t\geq\tau}\|\psi(t,\theta_{-t}\omega,\xi)\|_{C_\tau}\leq R_\xi^1(\omega),\ \omega\in\Omega.
\end{equation}
By (\ref{Conclu3}), we define
\begin{equation}\label{eq26}\phi\bigl(t,\omega,\xi-U(\omega)\bigr)=\psi(t,\omega,\xi)-\varphi\bigl(t,\omega,U(\omega)\bigr)=\psi(t,\omega,\xi)-U(\theta_{t}\omega),
\end{equation}
which together with (\ref{OU3-2}), (\ref{SFDE3}) and It\^{o}'s formula gives that
\begin{equation}\label{eq27}
\frac{\rm d}{{\rm d}t}\Bigl[\phi\bigl(t,\omega,\phi_0(\omega)\bigr)(0)\Bigr]
=L\Bigl[\phi\bigl(t,\omega,\phi_0(\omega)\bigr)\Bigr]+f\Bigl(\phi\bigl(t,\omega,\phi_0(\omega)\bigr)+U(\theta_{t}\omega)\Bigr),
\end{equation}
where $\phi_0(\omega)=\xi-U(\omega)$.

For $t\geq\tau$ and $-\tau\leq s\leq0$, using the variation-of-constants formula, see Theorem 1.2 in \cite[Chapter 6]{HL}, it follows immediately that
\begin{align}\label{eq28}
&\phi\bigl(t,\omega,\phi_0(\omega)\bigr)(s)\nonumber\\
=&r(t+s)\phi_0(\omega)(0)+\int_{-\tau}^0\int_{-\tau}^{u}r(t+s+\rho-u)\mu(d\rho)\phi_0(\omega)(u)du\nonumber\\
&+\int_0^{t+s}r(t+s-\rho)f\Bigl(\phi\bigl(\rho,\omega,\phi_0(\omega)\bigr)+U(\theta_{\rho}\omega)\Bigr)d\rho
\end{align}
and then
\begin{align}\label{eq29}
&\left\|\phi\bigl(t,\omega,\phi_0(\omega)\bigr)\right\|_{C_\tau}\nonumber\\
=&\sup_{-\tau\leq s\leq0}\left|\phi\bigl(t,\omega,\phi_0(\omega)\bigr)(s)\right|\nonumber\\
\leq&e^{\alpha\tau}C_\alpha e^{-\alpha t}\|\phi_0(\omega)\|_{C_\tau}+\tau e^{2\alpha\tau}C_\alpha e^{-\alpha t}|\mu|\bigl([-\tau,0]\bigr)\cdot\|\phi_0(\omega)\|_{C_\tau}\nonumber\\
&+e^{\alpha\tau}C_\alpha e^{-\alpha t}\int_0^{t}e^{\alpha\rho}\bigl(L\|U(\theta_{\rho}\omega)\|_{C_\tau}+|f(0)|\bigr)d\rho\nonumber\\
&+Le^{\alpha\tau}C_\alpha e^{-\alpha t}\int_0^{t}e^{\alpha\rho}\left\|\phi\bigl(\rho,\omega,\phi_0(\omega)\bigr)\right\|_{C_\tau}d\rho\nonumber\\
=&Ke^{-\alpha t}\|\phi_0(\omega)\|_{C_\tau}+e^{\alpha\tau}C_\alpha e^{-\alpha t}\int_0^{t}e^{\alpha\rho}\bigl(L\|U(\theta_{\rho}\omega)\|_{C_\tau}+|f(0)|\bigr)d\rho\nonumber\\
&+Le^{\alpha\tau}C_\alpha e^{-\alpha t}\int_0^{t}e^{\alpha\rho}\left\|\phi\bigl(\rho,\omega,\phi_0(\omega)\bigr)\right\|_{C_\tau}d\rho,
\end{align}
where
\[K=K(\alpha,\tau,\mu)=e^{\alpha\tau}C_\alpha+\tau e^{2\alpha\tau}C_\alpha|\mu|\bigl([-\tau,0]\bigr).\]
Therefore, it is obvious that
\begin{align}\label{eq30}
&e^{\alpha t}\left\|\phi\bigl(t,\omega,\phi_0(\omega)\bigr)\right\|_{C_\tau}\nonumber\\
\leq&K\|\phi_0(\omega)\|_{C_\tau}+e^{\alpha\tau}C_\alpha\int_0^{t}e^{\alpha\rho}\bigl(L\|U(\theta_{\rho}\omega)\|_{C_\tau}+|f(0)|\bigr)d\rho\nonumber\\
&+Le^{\alpha\tau}C_\alpha\int_0^{t}e^{\alpha\rho}\left\|\phi\bigl(\rho,\omega,\phi_0(\omega)\bigr)\right\|_{C_\tau}d\rho.
\end{align}
Combining this and the Gronwall inequality, we can easily obtain that
\begin{align}\label{eq31}
&e^{\alpha t}\left\|\phi\bigl(t,\omega,\phi_0(\omega)\bigr)\right\|_{C_\tau}\nonumber\\
\leq&K\|\phi_0(\omega)\|_{C_\tau}e^{Le^{\alpha\tau}C_\alpha t}+e^{\alpha\tau}C_\alpha\int_0^{t}e^{\alpha\rho}e^{Le^{\alpha\tau}C_\alpha (t-\rho)}\bigl(L\|U(\theta_{\rho}\omega)\|_{C_\tau}+|f(0)|\bigr)d\rho
\end{align}
and then
\begin{align}\label{eq32}
&\left\|\phi\bigl(t,\omega,\phi_0(\omega)\bigr)\right\|_{C_\tau}\nonumber\\
\leq&K\|\phi_0(\omega)\|_{C_\tau}e^{-(\alpha-Le^{\alpha\tau}C_\alpha)t}\nonumber\\
&+e^{\alpha\tau}C_\alpha\int_0^{t}e^{-(\alpha-Le^{\alpha\tau}C_\alpha) (t-\rho)}\bigl(L\|U(\theta_{\rho}\omega)\|_{C_\tau}+|f(0)|\bigr)d\rho,\ t\geq\tau.
\end{align}
This together with definitions of $\theta$ and $\phi_0$ shows that
\begin{align}\label{eq32-1}
&\left\|\phi\bigl(t,\theta_{-t}\omega,\phi_0(\theta_{-t}\omega)\bigr)\right\|_{C_\tau}\nonumber\\
\leq&K\|\phi_0(\theta_{-t}\omega)\|_{C_\tau}e^{-(\alpha-Le^{\alpha\tau}C_\alpha)t}\nonumber\\
&+e^{\alpha\tau}C_\alpha\int_0^{t}e^{-(\alpha-Le^{\alpha\tau}C_\alpha) (t-\rho)}\bigl(L\|U(\theta_{-(t-\rho)}\omega)\|_{C_\tau}+|f(0)|\bigr)d\rho\nonumber\\
\leq&K\|\xi\|_{C_\tau}e^{-(\alpha-Le^{\alpha\tau}C_\alpha)t}+K\|U(\theta_{-t}\omega)\|_{C_\tau}e^{-(\alpha-Le^{\alpha\tau}C_\alpha)t}\nonumber\\
&+e^{\alpha\tau}C_\alpha\int_0^{t}e^{-(\alpha-Le^{\alpha\tau}C_\alpha) (t-\rho)}\bigl(L\|U(\theta_{-(t-\rho)}\omega)\|_{C_\tau}+|f(0)|\bigr)d\rho\nonumber\\
=&K\|\xi\|_{C_\tau}e^{-(\alpha-Le^{\alpha\tau}C_\alpha)t}+Ke^{-(\alpha-Le^{\alpha\tau}C_\alpha)t}\|U(\theta_{-t}\omega)\|_{C_\tau}\nonumber\\
&+e^{\alpha\tau}C_\alpha\int_{-t}^0e^{(\alpha-Le^{\alpha\tau}C_\alpha)\rho}\bigl(L\|U(\theta_{\rho}\omega)\|_{C_\tau}+|f(0)|\bigr)d\rho,\ t\geq\tau.
\end{align}
Since $Le^{\alpha\tau}C_\alpha-\alpha<0$ and $U(\omega)$ is tempered (see Proposition \ref{prop1}), we set
\begin{align}\label{eq33}
\widetilde{R}_\xi(\omega)=&K\|\xi\|_{C_\tau}+K\sup_{t\geq0}\left\{e^{-(\alpha-Le^{\alpha\tau}C_\alpha)t}\|U(\theta_{-t}\omega)\|_{C_\tau}\right\}\nonumber\\
&+e^{\alpha\tau}C_\alpha\int_{-\infty}^0e^{(\alpha-Le^{\alpha\tau}C_\alpha)\rho}\bigl(L\|U(\theta_{\rho}\omega)\|_{C_\tau}+|f(0)|\bigr)d\rho
\end{align}
and then
\begin{equation}\label{eq33-1}
\sup_{t\geq\tau}\left\|\phi\bigl(t,\theta_{-t}\omega,\phi_0(\theta_{-t}\omega)\bigr)\right\|_{C_\tau}\leq\widetilde{R}_\xi(\omega),
\end{equation}
where we can see that the random variable $\widetilde{R}_\xi(\omega)$ is well defined for all $\omega\in\Omega$. In what follows, we will check that $\widetilde{R}_\xi(\omega)$ is tempered. To this end, set $\delta=\alpha-Le^{\alpha\tau}C_\alpha>0$, we only need to prove that for all $\omega\in\Omega$ and $\gamma>0$
\begin{equation}\label{eq34}
\sup_{t\in\mathbb{R}}
\left\{e^{-\gamma|t|}\sup_{u\geq0}\left\{e^{-\delta u}\|U(\theta_{-u}\circ\theta_t\omega)\|_{C_\tau}\right\}\right\}<\infty\end{equation}
and
\begin{equation}\label{eq35}\sup_{t\in\mathbb{R}}
\left\{e^{-\gamma|t|}\int_{-\infty}^0e^{\delta\rho}\|U(\theta_{\rho}\circ\theta_t\omega)\|_{C_\tau}d\rho\right\}<\infty.\end{equation}
In fact, for any $\gamma>0$,
\begin{align}\label{eq36}
&\sup_{t\in\mathbb{R}}\left\{e^{-\gamma|t|}\sup_{u\geq0}\left\{e^{-\delta u}\|U(\theta_{-u}\circ\theta_t\omega)\|_{C_\tau}\right\}\right\}\nonumber\\
\leq&\sup_{t\in\mathbb{R}}\left\{e^{-\gamma|t|}\sup_{u\geq0}\left\{e^{-(\delta\wedge\gamma)u}\|U(\theta_{-u}\circ\theta_t\omega)\|_{C_\tau}\right\}\right\}\nonumber\\
=&\sup_{t\in\mathbb{R}}\left\{e^{-\gamma|t|}e^{(\delta\wedge\gamma)|t|}\sup_{u\geq0}\left\{e^{-(\delta\wedge\gamma)u}e^{-(\delta\wedge\gamma)|t|}\|U(\theta_{t-u}\omega)\|_{C_\tau}\right\}\right\}\nonumber\\
\leq&\sup_{t\in\mathbb{R}}\left\{e^{-\gamma|t|}e^{(\delta\wedge\gamma)|t|}\sup_{u\geq0}\left\{e^{-(\delta\wedge\gamma)|t-u|}\|U(\theta_{t-u}\omega)\|_{C_\tau}\right\}\right\}\nonumber\\
\leq&\sup_{u\in\mathbb{R}}\left\{e^{-(\delta\wedge\gamma)|u|}\|U(\theta_{u}\omega)\|_{C_\tau}\right\}\nonumber\\
<&\infty,
\end{align}
which is based on (\ref{OU2}). On the other hand, for any $\gamma>0$,
\begin{align}\label{eq37}
&\sup_{t\in\mathbb{R}}\left\{e^{-\gamma|t|}\int_{-\infty}^0e^{\delta\rho}\|U(\theta_{\rho}\circ\theta_t\omega)\|_{C_\tau}d\rho\right\}\nonumber\\
\leq&\sup_{t\in\mathbb{R}}\left\{e^{-\gamma|t|}\int_{-\infty}^0 e^{-\frac\delta2|\rho|}e^{-(\frac\delta2\wedge\gamma)|\rho|}\|U(\theta_{\rho+t}\omega)\|_{C_\tau}d\rho\right\}\nonumber\\
\leq&\sup_{t\in\mathbb{R}}\left\{e^{-\gamma|t|}e^{(\frac\delta2\wedge\gamma)|t|}\int_{-\infty}^0 e^{-\frac\delta2|\rho|}e^{-(\frac\delta2\wedge\gamma)|\rho+t|}\|U(\theta_{\rho+t}\omega)\|_{C_\tau}d\rho\right\}\nonumber\\
\leq&\sup_{u\in\mathbb{R}}\left\{e^{-(\frac\delta2\wedge\gamma)|u|}\|U(\theta_{u}\omega)\|_{C_\tau}\right\}\int_{-\infty}^0 e^{-\frac\delta2|\rho|}d\rho\nonumber\\
<&\infty.
\end{align}
That is, the random variable $\widetilde{R}_\xi(\omega)$ is tempered.  Thus, from (\ref{eq26}) and (\ref{eq33-1}), we have immediately that
\begin{align}\label{eq38}
&\sup_{t\geq\tau}\|\psi(t,\theta_{-t}\omega,\xi)\|_{C_\tau}\nonumber\\
\leq&\sup_{t\geq\tau}\left\|\phi\bigl(t,\theta_{-t}\omega,\phi_0(\theta_{-t}\omega)\bigr)\right\|_{C_\tau}+\left\|U(\omega)\right\|_{C_\tau}\nonumber\\
\leq&\widetilde{R}_\xi(\omega)+\left\|U(\omega)\right\|_{C_\tau}\nonumber\\
\triangleq&R_\xi^1(\omega),\quad\omega\in\Omega,
\end{align}
which proves (\ref{eq25}). Our next claim is that there exists another tempered random variable $R_\xi^2(\omega)$ such that
\begin{equation}\label{eq39}
\sup_{0\leq t\leq\tau}\|\psi(t,\theta_{-t}\omega,\xi)\|_{C_\tau}\leq R_\xi^2(\omega),\quad\omega\in\Omega.
\end{equation}
Note that for $0\leq t\leq\tau$ and $-\tau\leq s\leq0$,
\begin{equation}\label{eq40}
\psi(t,\omega,\xi)(s)=\xi(t+s),\quad\omega\in\Omega,
\end{equation}
which implies that
\begin{align}\label{eq41-1}
&\|\psi(t,\theta_{-t}\omega,\xi)\|_{C_\tau}\nonumber\\
=&\sup_{-\tau\leq s\leq0}|\psi(t,\theta_{-t}\omega,\xi)(s)|\nonumber\\
\leq&\sup_{-\tau\leq s\leq-t}|\xi(t+s)|+\sup_{-t\leq s\leq0}|\psi(t,\theta_{-t}\omega,\xi)(s)|\nonumber\\
\leq&\|\xi\|_{C_\tau}+\|\psi(\tau,\theta_{-t}\omega,\xi)\|_{C_\tau},\quad\omega\in\Omega.
\end{align}
Consequently,
\begin{align}\label{eq41}
&\sup_{0\leq t\leq\tau}\|\psi(t,\theta_{-t}\omega,\xi)\|_{C_\tau}\nonumber\\
\leq&\|\xi\|_{C_\tau}+\sup_{0\leq t\leq\tau}\|\psi(\tau,\theta_{-t}\omega,\xi)\|_{C_\tau}\nonumber\\
=&\|\xi\|_{C_\tau}+\sup_{0\leq t\leq\tau}\|\psi(\tau,\theta_{-\tau}\circ\theta_{\tau-t}\omega,\xi)\|_{C_\tau}\nonumber\\
\leq&\|\xi\|_{C_\tau}+\sup_{0\leq t\leq\tau}R_\xi^1(\theta_{\tau-t}\omega)\nonumber\\
=&\|\xi\|_{C_\tau}+\sup_{t\in[0,\tau]\cap\mathbb{Q}}R_\xi^1(\theta_{\tau-t}\omega)\nonumber\\
\triangleq&R_\xi^2(\omega),\quad\omega\in\Omega.
\end{align}
Here, we observe the fact that the mapping $t\rightarrow U(\theta_t\omega)$ is continuous, which together with (\ref{eq33}) and (\ref{eq38}) yields the continuity of $R_\xi^1(\theta_t\omega)$ with respect to $t$ and the measurability of $R_\xi^2(\omega)$.

Actually, $R_\xi^2(\omega)$ is also tempered. For any $\gamma>0$ and $\omega\in\Omega$, using (\ref{eq38}) and (\ref{eq41}), it is evident that
\begin{align}\label{eq42}
&\sup_{t\in\mathbb{R}}\left\{e^{-\gamma|t|}\sup_{u\in[0,\tau]\cap\mathbb{Q}}R_\xi^1(\theta_{\tau-u}\circ\theta_t\omega)\right\}\nonumber\\
\leq&\sup_{t\in\mathbb{R}}\left\{e^{-\gamma|t|}\sup_{u\in[0,\tau]\cap\mathbb{Q}}\widetilde{R}_\xi(\theta_{t+\tau-u}\omega)\right\}
+\sup_{t\in\mathbb{R}}\left\{e^{-\gamma|t|}\sup_{u\in[0,\tau]\cap\mathbb{Q}}\|U(\theta_{t+\tau-u}\omega)\|_{C_\tau}\right\}\nonumber\\
\leq&\sup_{t\in\mathbb{R}}\left\{e^{-\gamma|t|}\sup_{u\in[0,\tau]\cap\mathbb{Q}}e^{\gamma|t+\tau-u|}\right\}
\sup_{t\in\mathbb{R}}\left\{e^{-\gamma|t|}\widetilde{R}_\xi(\theta_{t}\omega)\right\}\nonumber\\
&+\sup_{t\in\mathbb{R}}\left\{e^{-\gamma|t|}\sup_{u\in[0,\tau]\cap\mathbb{Q}}e^{\gamma|t+\tau-u|}\right\}
\sup_{t\in\mathbb{R}}\left\{e^{-\gamma|t|}\|U(\theta_{t}\omega)\|_{C_\tau}\right\}\nonumber\\
\leq&e^{\gamma\tau}\left(\sup_{t\in\mathbb{R}}\left\{e^{-\gamma|t|}\widetilde{R}_\xi(\theta_{t}\omega)\right\}
+\sup_{t\in\mathbb{R}}\left\{e^{-\gamma|t|}\|U(\theta_{t}\omega)\|_{C_\tau}\right\}\right)\nonumber\\
<&\infty.
\end{align}

Finally, let $R_\xi(\omega)=\max\{R_\xi^1(\omega),R_\xi^2(\omega)\}$ for all $\omega\in\Omega$, which together with (\ref{eq25}) and (\ref{eq41}) ends the proof of Lemma \ref{lem2}.
\end{proof}

\begin{lemma}\label{lem3}
Under the same assumptions presented in Lemma \ref{lem2}, for any $\xi\in C_\tau$ and $\omega\in\Omega$, there exists an $\mathscr{F}_-$-measurable random variable $V_\xi:\Omega\rightarrow C_\tau$ such that
\begin{equation}\label{eq43}
\lim_{t\rightarrow\infty}\psi(t,\theta_{-t}\omega,\xi)(\bullet)=V_\xi(\omega)(\bullet)
\end{equation}
in $C_\tau$. Moreover, the random variable $V_\xi$ is a tempered random equilibrium of the RDS $(\theta,\psi)$.
\end{lemma}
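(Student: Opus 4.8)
The plan is to establish the four assertions in turn: the pullback trajectory $\{\psi(t,\theta_{-t}\omega,\xi)\}_{t\ge\tau}$ is Cauchy in $C_\tau$ and hence converges to a limit $V_\xi(\omega)$; this $V_\xi$ is $\mathscr{F}_-$-measurable; it is tempered; and it is a random equilibrium of $(\theta,\psi)$. For the convergence I would fix $\omega\in\Omega$ and $t_2>t_1\ge\tau$ and use the cocycle property to peel off the last $t_1$ units of time. Writing $\theta_{-t_2}\omega=\theta_{-(t_2-t_1)}\circ\theta_{-t_1}\omega$ and applying the cocycle identity of Definition \ref{DRDS}(ii) with forward times $t_2-t_1$ and $t_1$ gives
\[
\psi(t_2,\theta_{-t_2}\omega,\xi)=\psi\bigl(t_1,\theta_{-t_1}\omega,\psi(t_2-t_1,\theta_{-t_2}\omega,\xi)\bigr).
\]
Thus both $\psi(t_2,\theta_{-t_2}\omega,\xi)$ and $\psi(t_1,\theta_{-t_1}\omega,\xi)$ are values of the same map $\psi(t_1,\theta_{-t_1}\omega,\cdot)$ at two initial data, so Lemma \ref{lem1} (with base point $\theta_{-t_1}\omega$ and forward time $t_1\ge\tau$) yields
\[
\bigl\|\psi(t_2,\theta_{-t_2}\omega,\xi)-\psi(t_1,\theta_{-t_1}\omega,\xi)\bigr\|_{C_\tau}\le K\,e^{(Le^{\alpha\tau}C_\alpha-\alpha)t_1}\bigl\|\psi(t_2-t_1,\theta_{-t_2}\omega,\xi)-\xi\bigr\|_{C_\tau}.
\]

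Since $\theta_{-t_2}\omega=\theta_{-(t_2-t_1)}\circ\theta_{-t_1}\omega$, Lemma \ref{lem2} applied with base point $\theta_{-t_1}\omega$ bounds $\|\psi(t_2-t_1,\theta_{-t_2}\omega,\xi)\|_{C_\tau}\le R_\xi(\theta_{-t_1}\omega)$ uniformly in $t_2$, so the last factor is at most $R_\xi(\theta_{-t_1}\omega)+\|\xi\|_{C_\tau}$. Setting $\delta=\alpha-Le^{\alpha\tau}C_\alpha>0$ and invoking the temperedness of $R_\xi$ from Lemma \ref{lem2}, for any $\gamma\in(0,\delta)$ there is a constant $M_\gamma(\omega)$ with $R_\xi(\theta_{-t_1}\omega)\le M_\gamma(\omega)e^{\gamma t_1}$, so the whole right-hand side is $O\bigl(e^{-(\delta-\gamma)t_1}\bigr)\to0$ as $t_1\to\infty$, uniformly over $t_2>t_1$. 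Hence the pullback trajectory is Cauchy in the complete space $C_\tau$ and converges to some $V_\xi(\omega)$, which is (\ref{eq43}). For measurability, each map $\omega\mapsto\psi(t,\theta_{-t}\omega,\xi)$ depends only on the Wiener increments $\{B(u,\omega):-t\le u\le0\}$ (the shift $\theta_{-t}$ pushes the driving increments into the past), hence is $\mathscr{F}_-$-measurable; as an everywhere limit of $\mathscr{F}_-$-measurable maps, $V_\xi$ is $\mathscr{F}_-$-measurable.

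For temperedness I would pass to the limit $t\to\infty$ in the uniform bound $\sup_{t\ge0}\|\psi(t,\theta_{-t}\omega,\xi)\|_{C_\tau}\le R_\xi(\omega)$ of Lemma \ref{lem2} to get $\|V_\xi(\omega)\|_{C_\tau}\le R_\xi(\omega)$ for every $\omega\in\Omega$; replacing $\omega$ by $\theta_t\omega$ and using that $R_\xi$ is tempered then gives $\sup_{t\in\mathbb{R}}\{e^{-\gamma|t|}\|V_\xi(\theta_t\omega)\|_{C_\tau}\}<\infty$ for all $\gamma>0$. Finally, for the equilibrium property I would argue exactly as in (\ref{eq10-2}): by the continuity of $\psi(t,\omega,\cdot)$ and the cocycle property,
\[
\psi\bigl(t,\omega,V_\xi(\omega)\bigr)=\lim_{\tilde t\to\infty}\psi\bigl(t,\omega,\psi(\tilde t,\theta_{-\tilde t}\omega,\xi)\bigr)=\lim_{\tilde t\to\infty}\psi\bigl(t+\tilde t,\theta_{-(t+\tilde t)}\circ\theta_t\omega,\xi\bigr)=V_\xi(\theta_t\omega),
\]
so $V_\xi$ is a tempered random equilibrium.

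The main obstacle is the Cauchy argument: the contraction of Lemma \ref{lem1} alone is not enough, because the initial datum $\psi(t_2-t_1,\theta_{-t_2}\omega,\xi)$ fed into the contraction at time $t_1$ is controlled only by the $t_1$-dependent bound $R_\xi(\theta_{-t_1}\omega)$, which may grow in $t_1$. The essential point is to pair the exponential pullback contraction with the temperedness of the absorbing random variable $R_\xi$ supplied by Lemma \ref{lem2}, choosing $\gamma<\delta$ so that the subexponential growth of $R_\xi(\theta_{-t_1}\omega)$ is strictly dominated by $e^{-\delta t_1}$. A secondary care point is the bookkeeping of the shift identities $\theta_{-t_2}\omega=\theta_{-(t_2-t_1)}\circ\theta_{-t_1}\omega$, which must be tracked precisely so that Lemmas \ref{lem1} and \ref{lem2} are applied with the correct base point.
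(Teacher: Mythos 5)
Your proposal is correct and follows essentially the same route as the paper: the same cocycle decomposition $\psi(t_2,\theta_{-t_2}\omega,\xi)=\psi\bigl(t_1,\theta_{-t_1}\omega,\psi(t_2-t_1,\theta_{-t_2}\omega,\xi)\bigr)$, the same pairing of the contraction from Lemma \ref{lem1} with the tempered absorbing bound $R_\xi$ from Lemma \ref{lem2} (the paper fixes $\gamma=\delta/2$ where you allow any $\gamma\in(0,\delta)$), the same pullback-shift argument for $\mathscr{F}_-$-measurability, and the same limit arguments for temperedness and the equilibrium property. The only cosmetic difference is that the paper cites Lemma II.2.1 of Mohammed to upgrade pointwise measurability of $\psi(t,\cdot,\xi)(s)$ to measurability as a $C_\tau$-valued map, a detail you state informally.
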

\begin{proof}
Choose any $\tau\leq t_1<t_2$, using the cocycle property, Lemma \ref{lem1} and Lemma \ref{lem2}, we can easily conclude that
\begin{align}\label{eq44}
&\|\psi(t_2,\theta_{-t_2}\omega,\xi)-\psi(t_1,\theta_{-t_1}\omega,\xi)\|_{C_\tau}\nonumber\\
=&\|\psi\bigl(t_1,\theta_{-t_1}\omega,\psi(t_2-t_1,\theta_{-t_2}\omega,\xi)\bigr)-\psi(t_1,\theta_{-t_1}\omega,\xi)\|_{C_\tau}\nonumber\\
\leq&K e^{-(\alpha-Le^{\alpha\tau}C_\alpha)t_1}\|\psi(t_2-t_1,\theta_{-(t_2-t_1)}\circ\theta_{-t_1}\omega,\xi)-\xi\|_{C_\tau}\nonumber\\
\leq&K e^{-(\alpha-Le^{\alpha\tau}C_\alpha)t_1}\left(\sup_{t\geq0}\|\psi(t,\theta_{-t}\circ\theta_{-t_1}\omega,\xi)\|_{C_\tau}+\|\xi\|_{C_\tau}\right)\nonumber\\
\leq&K e^{-(\alpha-Le^{\alpha\tau}C_\alpha)t_1}\bigl(R_\xi(\theta_{-t_1}\omega)+\|\xi\|_{C_\tau}\bigr)\nonumber\\
\leq&K e^{-\frac{\alpha-Le^{\alpha\tau}C_\alpha}{2}t_1}\left(\sup_{t\in\mathbb{R}}\left\{e^{-\frac{\alpha-Le^{\alpha\tau}C_\alpha}{2}|t|}R_\xi(\theta_{t}\omega)\right\}+\|\xi\|_{C_\tau}\right)\nonumber\\
&\longrightarrow0\quad {\rm as}\quad t_1\rightarrow\infty.
\end{align}
That is, for fixed $\xi\in C_\tau$ and $\omega\in\Omega$, $\{\psi(t,\theta_{-t}\omega,\xi):t\geq0\}$ is Cauchy in the Banach space $C_\tau$, which gives  (\ref{eq43}).

In addition, due to the fact that for all $-\tau\leq s\leq0$, $\psi(t,\omega,\xi)(s)$ is $\mathscr{F}_0^t$-adapted, where $\mathscr{F}_0^t=\sigma\{B(u):0\leq u\leq t\}$. This together with Lemma II.2.1 in \cite{Mo} shows that $\psi(t,\bullet,\xi):\Omega\rightarrow C_\tau$ is $\mathscr{F}_0^t$-measurable. Note that \begin{equation}\label{eq45}\theta_{-t}^{-1}\mathscr{F}_0^t=\mathscr{F}_{-t}^0=\sigma\{B(u):-t\leq u\leq 0\}\subset\mathscr{F}_-,\end{equation}
which implies that $\psi(t,\theta_{-t}\omega,\xi)$ is $\mathscr{F}_-$-measurable for all $t\geq0$. Consequently, the random variable $V_\xi:\Omega\rightarrow C_\tau$ is also $\mathscr{F}_-$-measurable.

Finally, using the similar argument in the proof of (\ref{eq10-2}), it is clear that $V_\xi$ is a random equilibrium of $(\theta,\psi)$. Moreover, by (\ref{eq38}) and (\ref{eq43}), we can deduce easily that
\begin{equation}\label{eq46}
\|V_\xi(\omega)\|_{C_\tau}\leq R_\xi^1(\omega),\quad\omega\in\Omega,
\end{equation}
where the random variable $R_\xi^1(\omega)$ is tempered given in (\ref{eq38}). The proof is complete.
\end{proof}

\begin{lemma}\label{lem4}
Under the same assumptions presented in Lemma \ref{lem2}, for any two tempered random equilibria $V_1(\omega)$ and $V_2(\omega)$, then
\begin{equation}\label{eq47}
V_1(\omega)=V_2(\omega),\quad\omega\in\Omega.
\end{equation}
\end{lemma}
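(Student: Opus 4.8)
The plan is to combine the pullback form of the invariance identity with the contraction estimate of Lemma \ref{lem1} and then use temperedness to absorb the backward growth of the two equilibria. First I would record that, by Definition \ref{Equili}, each random equilibrium satisfies $\psi\bigl(t,\omega,V_i(\omega)\bigr)=V_i(\theta_t\omega)$ for all $t\geq0$ and $\omega\in\Omega$; replacing $\omega$ by $\theta_{-t}\omega$ and using $\theta_t\circ\theta_{-t}=\mathrm{id}$ yields the pullback representation
\[
V_i(\omega)=\psi\bigl(t,\theta_{-t}\omega,V_i(\theta_{-t}\omega)\bigr),\qquad i=1,2.
\]

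Next I would compare the two equilibria at the same sample point $\omega$. Subtracting the two representations and applying Lemma \ref{lem1} with base point $\theta_{-t}\omega$ and the two initial data $V_1(\theta_{-t}\omega)$ and $V_2(\theta_{-t}\omega)$, one obtains for $t\geq\tau$
\[
\|V_1(\omega)-V_2(\omega)\|_{C_\tau}\leq K\,e^{(Le^{\alpha\tau}C_\alpha-\alpha)t}\,\|V_1(\theta_{-t}\omega)-V_2(\theta_{-t}\omega)\|_{C_\tau},
\]
where $K=e^{\alpha\tau}C_\alpha+\tau e^{2\alpha\tau}C_\alpha\cdot|\mu|\bigl([-\tau,0]\bigr)$ as in Lemma \ref{lem1}.

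The core of the argument is then to show the right-hand side vanishes as $t\to\infty$. Since $V_1$ and $V_2$ are tempered, Definition \ref{Tem} supplies, for every $\gamma>0$, a finite random constant $C_\gamma(\omega)=\sum_{i=1}^2\sup_{s\in\mathbb{R}}\bigl\{e^{-\gamma|s|}\|V_i(\theta_s\omega)\|_{C_\tau}\bigr\}$ with $\|V_i(\theta_{-t}\omega)\|_{C_\tau}\leq C_\gamma(\omega)e^{\gamma t}$ for $t\geq0$, so the backward norms grow at most subexponentially with rate $\gamma$. Fixing $\gamma\in(0,\alpha-Le^{\alpha\tau}C_\alpha)$, which is a nonempty interval precisely because the standing hypothesis gives $\alpha-Le^{\alpha\tau}C_\alpha>0$, the estimate becomes
\[
\|V_1(\omega)-V_2(\omega)\|_{C_\tau}\leq K\,C_\gamma(\omega)\,e^{(Le^{\alpha\tau}C_\alpha-\alpha+\gamma)t},
\]
whose exponent is strictly negative; letting $t\to\infty$ forces $\|V_1(\omega)-V_2(\omega)\|_{C_\tau}=0$ for every $\omega\in\Omega$, which is (\ref{eq47}).

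The only delicate point is ensuring the contraction rate beats the temperedness growth, i.e.\ that one may choose $\gamma$ small enough that $Le^{\alpha\tau}C_\alpha-\alpha+\gamma<0$. This is exactly what the assumption $Le^{\alpha\tau}C_\alpha-\alpha<0$ of Theorem \ref{thm2} guarantees, so there is no genuine obstacle beyond bookkeeping of the exponential rates; the proof is essentially a one-line pullback identity followed by Lemma \ref{lem1} and a single admissible choice of $\gamma$.
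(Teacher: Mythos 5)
Your proof is correct and follows essentially the same route as the paper: the pullback invariance identity $V_i(\omega)=\psi\bigl(t,\theta_{-t}\omega,V_i(\theta_{-t}\omega)\bigr)$, the contraction estimate of Lemma \ref{lem1}, and temperedness to show the backward growth is beaten by the decay rate $\alpha-Le^{\alpha\tau}C_\alpha>0$. The only cosmetic difference is that the paper bounds the difference $\|V_1-V_2\|_{C_\tau}$ by a single tempered random variable $R$ and takes the temperedness exponent equal to $\tfrac{\delta}{2}$ with $\delta=\alpha-Le^{\alpha\tau}C_\alpha$, whereas you bound each $\|V_i(\theta_{-t}\omega)\|_{C_\tau}$ separately and pick any $\gamma<\delta$; these are interchangeable.
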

\begin{proof}
Since $V_1(\omega)$ and $V_2(\omega)$ are two tempered random equilibria of $(\theta,\psi)$, it follows that
\[\psi\bigl(t,\theta_{-t}\omega,V_i(\theta_{-t}\omega)\bigr)=V_i(\omega),\quad i=1,2,\ t\geq0,\]
and there exists a tempered random variable $R:\Omega\rightarrow\mathbb{R}_+$ such that
\[\|V_1(\omega)-V_2(\omega)\|_{C_\tau}\leq R(\omega)\]
for all $\omega\in\Omega$. By (\ref{Conclu7}), it is obvious that
\begin{align}\label{eq48}
&\|V_1(\omega)-V_2(\omega)\|_{C_\tau}\nonumber\\
=&\|\psi\bigl(t,\theta_{-t}\omega,V_1(\theta_{-t}\omega)\bigr)-\psi\bigl(t,\theta_{-t}\omega,V_2(\theta_{-t}\omega)\bigr)\|_{C_\tau}\nonumber\\
\leq& K\|V_1(\theta_{-t}\omega)-V_2(\theta_{-t}\omega)\|_{C_\tau}e^{-(\alpha-Le^{\alpha\tau}C_\alpha)t}\nonumber\\
\leq& K\sup_{u\in\mathbb{R}}\left\{e^{-\frac{\delta}{2}|u|}R(\theta_u\omega)\right\}e^{-\frac\delta2t},
\end{align}
where $\delta=\alpha-Le^{\alpha\tau}C_\alpha$, $t\geq\tau$ and $\omega\in\Omega$. Let $t\rightarrow\infty$ in (\ref{eq48}), the proof is complete.
\end{proof}

\noindent
{\bf Proof of Theorem \ref{thm2}.} From Lemma \ref{lem3} and Lemma \ref{lem4}, for any $\xi,\eta\in C_\tau$, $V_\xi(\omega)=V_\eta(\omega)$ for all $\omega\in\Omega$. That is, the random equilibrium is unique, the proof is complete.\quad\quad$\Box$

\noindent
{\bf Proof of Corollary \ref{cor2-1}.} By Theorem \ref{thm2}, it is immediate that \begin{equation}\label{eq48-1}\lim_{t\rightarrow\infty}\|\psi(t,\theta_{-t}\omega,\xi)-\psi(t,\theta_{-t}\omega,\eta)\|_{C_\tau}=0
\end{equation}
for all $\omega\in\Omega$ and $\xi,\eta\in C_\tau$. Observe that $\theta_t\mathbb{P}=\mathbb{P}$ for all $t\in\mathbb{R}$, (\ref{eq48-1}) gives that (\ref{Conclu4-1}) and (\ref{Conclu4-2}) hold in probability. The proof is complete.\quad\quad$\Box$

\begin{lemma}\label{lem5}
For any $\gamma>0$, then
\begin{equation}\label{eq49}
\mathbb{E}\left|\sup_{t\geq0}\left\{e^{-\gamma t}\|U(\theta_{-t}\omega)\|_{C_\tau}\right\}\right|^2<\infty.
\end{equation}
\end{lemma}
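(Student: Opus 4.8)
The plan is to start from the explicit integration-by-parts representation of $U$ derived in the proof of Proposition \ref{prop1}, apply the Wiener shift to pass from $U(\omega)$ to $U(\theta_{-t}\omega)$, and thereby reduce the claim to an $L^2$-estimate for an exponentially weighted supremum of the Brownian motion of exactly the type already established in (\ref{eq20-8}). Using the shift identity $B(a,\theta_{-t}\omega)=B(a-t,\omega)-B(-t,\omega)$ in (\ref{eq11}), for $t\geq0$ and $-\tau\leq s\leq0$ one has
\[
U(\theta_{-t}\omega)(s)=\Sigma\bigl[B(s-t,\omega)-B(-t,\omega)\bigr]+\int_{-\infty}^{s}\left[\int_{-\tau}^0\mu(d\rho)r(s-u+\rho)\right]\Sigma\bigl[B(u-t,\omega)-B(-t,\omega)\bigr]du,
\]
which is precisely (\ref{eq13}) with $t$ replaced by $-t$. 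Taking the supremum over $s$ and inserting the kernel bound $\left\|\int_{-\tau}^0\mu(d\rho)r(s-u+\rho)\right\|\leq C_\alpha e^{-\alpha(s-u)}\kappa$ for $u\leq s$, with $\kappa:=\int_{-\tau}^0 e^{-\alpha\rho}|\mu|(d\rho)$, I would dominate $\|U(\theta_{-t}\omega)\|_{C_\tau}$ by a sum of four terms, built respectively from $|B(s-t,\omega)|$, $|B(-t,\omega)|$, the convolution $\int_{-\infty}^{s}e^{-\alpha(s-u)}|B(u-t,\omega)|du$, and again $|B(-t,\omega)|$ (times the finite constant $C_\alpha\|\Sigma\|\kappa/\alpha$).

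The central device is the observation that for \emph{any} $\beta>0$ the random variable
\[
N_\beta(\omega):=\sup_{u\geq0}\bigl\{e^{-\beta u}|B(-u,\omega)|\bigr\}
\]
belongs to $L^2(\Omega,\mathscr{F},\mathbb{P})$; this follows verbatim from the dyadic decomposition and Doob's inequality carried out in (\ref{eq20-8}), with the exponent $\alpha$ there replaced by $2\beta$. Since $s-t\leq0$, $u-t\leq0$ and $-t\leq0$ for all relevant arguments, every occurrence of the Brownian motion is controlled through $|B(r,\omega)|\leq e^{\beta|r|}N_\beta(\omega)$ for $r\leq0$. I would then fix the auxiliary exponent $\beta:=\tfrac12(\alpha\wedge\gamma)$, so that $\beta<\alpha$ and $\beta<\gamma$ simultaneously. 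The three pointwise terms are each bounded by a constant multiple of $N_\beta(\omega)\,e^{-(\gamma-\beta)t}$, while the convolution term gives
\[
e^{-\gamma t}\,C_\alpha\|\Sigma\|\,\kappa\,N_\beta(\omega)\,e^{\beta t}\sup_{-\tau\leq s\leq0}\int_{-\infty}^{s}e^{-\alpha(s-u)}e^{-\beta u}du\leq \frac{C_\alpha\|\Sigma\|\,\kappa\,e^{\beta\tau}}{\alpha-\beta}\,N_\beta(\omega)\,e^{-(\gamma-\beta)t},
\]
where the inner integral equals $e^{-\beta s}/(\alpha-\beta)\leq e^{\beta\tau}/(\alpha-\beta)$ and converges precisely because $\beta<\alpha$. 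Taking the supremum over $t\geq0$, and noting that every residual exponent $\gamma-\beta$ is strictly positive, yields $\sup_{t\geq0}\{e^{-\gamma t}\|U(\theta_{-t}\omega)\|_{C_\tau}\}\leq C\,N_\beta(\omega)$ for a deterministic constant $C$; squaring and taking expectations then gives (\ref{eq49}) from $N_\beta\in L^2$.

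The main obstacle is the convolution term: one must extract the decaying factor $e^{-\gamma t}$ and at the same time keep the remaining $u$-integral finite, which is feasible only after committing to an auxiliary exponent $\beta$ lying strictly below both $\alpha$ (for convergence of $\int_{-\infty}^{s}e^{(\alpha-\beta)u}du$) and $\gamma$ (so that the residual $e^{-(\gamma-\beta)t}$ stays bounded on $t\geq0$). The supporting ingredient, namely the general $L^2$ bound $N_\beta\in L^2$ for arbitrary $\beta>0$, is a routine repetition of the computation already displayed in (\ref{eq20-8}) and requires no new idea.
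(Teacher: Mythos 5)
Your proof is correct and takes essentially the same route as the paper: both start from the shifted representation (\ref{eq13}) (with $t$ replaced by $-t$), bound the resulting four terms using the auxiliary exponent $\beta=\tfrac12(\gamma\wedge\alpha)$ to keep the convolution integral finite while retaining exponential decay in $t$, and reduce the claim to the $L^2$-bound for $\sup_{t\geq0}\bigl\{e^{-\beta t}|B(-t,\omega)|\bigr\}$ obtained by the dyadic decomposition and Doob's inequality exactly as in (\ref{eq20-8}). The only cosmetic difference is that you absorb everything into the single weighted supremum $N_\beta$, whereas the paper's (\ref{eq51}) keeps two suprema with exponents $\gamma$ and $\tfrac{\gamma\wedge\alpha}{2}$.
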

\begin{proof}From (\ref{eq13}), it is evident that for $t\geq0$,
\begin{align}\label{eq50}
&e^{-\gamma t}\|U(\theta_{-t}\omega)\|_{C_\tau}\nonumber\\
\leq&\|\Sigma\|\cdot e^{-\gamma t}\sup_{-\tau\leq s\leq0}|B(s-t,\omega)|+\|\Sigma\|\cdot e^{-\gamma t}|B(-t,\omega)|\nonumber\\
&+\sup_{-\tau\leq s\leq0}\int_{-\infty}^{s}\left\|\int_{-\tau}^0\mu(d\rho)r(s-u+\rho)\right\|\cdot\|\Sigma\|\cdot e^{-\gamma t}|B(u-t,\omega)|du\nonumber\\
&+\left(\sup_{-\tau\leq s\leq0}\int_{-\infty}^{s}\left\|\int_{-\tau}^0\mu(d\rho)r(s-u+\rho)\right\|\cdot\|\Sigma\|du\right)e^{-\gamma t}|B(-t,\omega)|\nonumber\\
\leq&\|\Sigma\|e^{\gamma\tau}\sup_{t\geq0}\left\{e^{-\gamma t}|B(-t,\omega)|\right\}+\|\Sigma\|\sup_{t\geq0}\left\{e^{-\gamma t}|B(-t,\omega)|\right\}\nonumber\\
&+e^{2\alpha\tau}C_\alpha\|\Sigma\|\cdot|\mu|\bigl([-\tau,0]\bigr)\int_{-\infty}^{0}e^{\alpha u}e^{-\gamma t}|B(u-t,\omega)|du\nonumber\\
&+e^{2\alpha\tau}C_\alpha\|\Sigma\|\cdot|\mu|\bigl([-\tau,0]\bigr)\int_{-\infty}^{0}e^{\alpha u}du\cdot e^{-\gamma t}|B(-t,\omega)|\nonumber\\
\leq&\|\Sigma\|e^{\gamma\tau}\sup_{t\geq0}\left\{e^{-\gamma t}|B(-t,\omega)|\right\}+\|\Sigma\|\sup_{t\geq0}\left\{e^{-\gamma t}|B(-t,\omega)|\right\}\nonumber\\
&+e^{2\alpha\tau}C_\alpha\|\Sigma\|\cdot|\mu|\bigl([-\tau,0]\bigr)\int_{-\infty}^{0}e^{(\alpha-\frac{\gamma\wedge\alpha}{2})u}du\cdot\sup_{t\geq0}\left\{e^{-\frac{\gamma\wedge\alpha}{2} t}|B(-t,\omega)|\right\}\nonumber\\
&+e^{2\alpha\tau}C_\alpha\|\Sigma\|\cdot|\mu|\bigl([-\tau,0]\bigr)\int_{-\infty}^{0}e^{\alpha u}du\cdot\sup_{t\geq0}\left\{e^{-\gamma t}|B(-t,\omega)|\right\},
\end{align}
which implies that
\begin{align}\label{eq51}
&\sup_{t\geq0}\left\{e^{-\gamma t}\|U(\theta_{-t}\omega)\|_{C_\tau}\right\}\nonumber\\
\leq&K_1\sup_{t\geq0}\left\{e^{-\gamma t}|B(-t,\omega)|\right\}+K_2\sup_{t\geq0}\left\{e^{-\frac{\gamma\wedge\alpha}{2} t}|B(-t,\omega)|\right\}.
\end{align}
Here, $K_1$ and $K_2$ are two positive constants. Similar to the proof in (\ref{eq20-8}), the proof of (\ref{eq49}) is straightforward.
\end{proof}

\noindent
{\bf Proof of Theorem \ref{thm3}.} By  (\ref{eq38}) and the same method as in Theorem \ref{thm1-1}, we are reduced to prove that $R_\xi^1\in L^2(\Omega,\mathscr{F},\mathbb{P};\mathbb{R}_+)$, where $R_\xi^1(\omega)=\widetilde{R}_\xi(\omega)+\left\|U(\omega)\right\|_{C_\tau}$. Therefore, we are left with the task of showing that $\widetilde{R}_\xi\in L^2(\Omega,\mathscr{F},\mathbb{P};\mathbb{R}_+)$. Note that by (\ref{eq33}),
\begin{align}\label{eq52}
\widetilde{R}_\xi(\omega)=&K\|\xi\|_{C_\tau}+K\sup_{t\geq0}\left\{e^{-\delta t}\|U(\theta_{-t}\omega)\|_{C_\tau}\right\}\nonumber\\
&+e^{\alpha\tau}C_\alpha\int_{-\infty}^0e^{\delta\rho}\bigl(L\|U(\theta_{\rho}\omega)\|_{C_\tau}+|f(0)|\bigr)d\rho\nonumber\\
\leq&K\|\xi\|_{C_\tau}+K\sup_{t\geq0}\left\{e^{-\delta t}\|U(\theta_{-t}\omega)\|_{C_\tau}\right\}\nonumber\\
&+e^{\alpha\tau}C_\alpha\int_{-\infty}^0e^{\frac\delta2\rho}d\rho\left(L\sup_{t\geq0}\left\{e^{-\frac\delta2t}\|U(\theta_{-t}\omega)\|_{C_\tau}\right\}+|f(0)|\right),\nonumber
\end{align}
where $\delta=\alpha-Le^{\alpha\tau}C_\alpha>0$. Using Lemma \ref{lem5}, it follows obviously that $\mathbb{E}|\widetilde{R}_\xi|^2<\infty$. The proof is complete.
\quad\quad$\Box$

\noindent
{\bf Proof of Corollary \ref{cor2}.} Combining Theorem \ref{thm3} and H\"{o}lder's inequality, the proof is straightforward.\quad\quad$\Box$

\section{Examples}
In this section, we will give some examples to illustrate our main results. For simplicity, we assume that $n=m=1$.

\noindent
{\bf Example 6.1.} First, we consider the following $1$-dimensional affine stochastic delay differential equation (SDDE)
\begin{equation}\label{Ex6.1.1}
dx(t)=\bigl[-ax(t)+bx(t-1)\bigr]dt+\sigma dW_t,\end{equation}
with the initial value
\begin{equation}\label{Ex6.1.2}x(s)=x_0(s)=\xi(s),\quad s\in[-\tau,0]\ {\rm and}\ \xi\in C_\tau,
\end{equation}
where $a>0$, $b\in\mathbb{R}$, $\sigma>0$, $\tau=1$ and $W_t$ is a $1$-dimensional Brownian motion. If $|b|<a$, we can conclude that
\begin{equation}\label{Ex6.1.3}
|r(t)|\leq e^{(-a+\mu)t},\quad t\geq-1,\end{equation}
where $|b|<\mu<a$ and $\mu e^{-a+\mu}-|b|=0$. That is, we can choose $\alpha=a-\mu>0$ and $C_\alpha=1$ in (\ref{Eigen2}), see \cite[p.392]{LM}. Consequently, by Theorem \ref{thm1}, there exists a globally attracting stationary solution (generalized Ornstein-Uhlenbeck process) for (\ref{Ex6.1.1}) and (\ref{Ex6.1.2}) in the space $C_\tau$.

\noindent
{\bf Example 6.2.} Secondly, we consider the following $1$-dimensional nonlinear SDDE
\begin{equation}\label{Ex6.2.1}
dx(t)=\Bigl[-2x(t)+x(t-1)+\frac14\sin\bigl(x(t-1)\bigr)\Bigr]dt+\sigma dW_t,\end{equation}
with the initial value
\begin{equation}\label{Ex6.2.2}x(s)=x_0(s)=\xi(s),\quad s\in[-\tau,0]\ {\rm and}\ \xi\in C_\tau,
\end{equation}
where $\sigma>0$, $\tau=1$ and $W_t$ is a $1$-dimensional Brownian motion. Note that the unique solution of $\mu e^{-2+\mu}-1=0$ is $\mu\approx1.55786$, which together with (\ref{Ex6.1.3}) implies that
\begin{equation}\label{Ex6.2.3}
|r(t)|\leq e^{(-2+\mu)t}\leq e^{-0.4t},\quad t\geq-1.\end{equation}
Let $\alpha=0.4$, $C_\alpha=1$, $L=\frac14$, it follows that
\[Le^{\alpha\tau}C_\alpha-\alpha=\frac{e^{0.4}}{4}-0.4\approx-0.02704<0.\]
Therefore, applying Theorem \ref{thm2}, there is a unique globally stable stationary
solution for (\ref{Ex6.2.1}) and (\ref{Ex6.2.2}), which attracts all
pullback trajectories in the space $C_\tau$. Moreover, the synchronization of forward trajectories
will occur based on Corollary \ref{cor2-1}, see Fig. 1.

\begin{figure}[h]
\centering
\includegraphics[width=15cm]{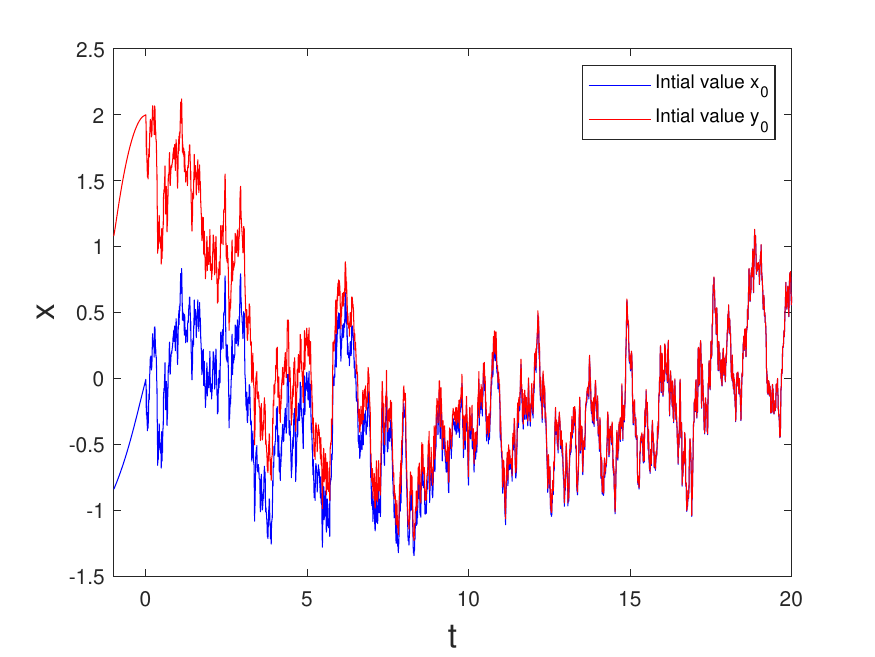}
{\caption{The numerical simulation for $x(t)$ of (\ref{Ex6.2.1}) and (\ref{Ex6.2.2}) with differential initial values $x_0=\sin t$ and $y_0=2\cos t$ for all $t\in[-1,0]$.}}
\end{figure}

\section{Discussion and open problems}
In this paper, we have considered the global stability of stationary solutions for affine and nonlinear SFDEs with additive white noise. In Theorem \ref{thm2}, we assumed that the time delay can not be too large. In fact, in \cite{Lv2}, we have showed that the existence of globally attracting stationary solutions for some semilinear SDDEs with additive white noise can be obtained regardless of the value of $\tau$. Based on some numerical evidences, we leave it as an open problem whether the same conclusion of Theorem \ref{thm2} holds for large delays. In the meantime, the method presented in the proof of Theorem \ref{thm2} may be applied to investigate nonlinear SFDEs satisfying a one-sided Lipschitz condition. Finally, it is left as an open problem to establish similar results in other situations, such as multiplicative noise. These important problems will be the subject of subsequent research.

\end{document}